\newenvironment{breakablealgorithm}
  {% \begin{breakablealgorithm}
   \begin{center}
     \refstepcounter{algorithm}% New algorithm
     \hrule height.8pt depth0pt \kern2pt% \@fs@pre for \@fs@ruled
     \renewcommand{\caption}[2][\relax]{% Make a new \caption
       {\raggedright\textbf{\ALG@name~\thealgorithm} ##2\par}%
       \ifx\relax##1\relax % #1 is \relax
         \addcontentsline{loa}{algorithm}{\protect\numberline{\thealgorithm}##2}%
       \else % #1 is not \relax
         \addcontentsline{loa}{algorithm}{\protect\numberline{\thealgorithm}##1}%
       \fi
       \kern2pt\hrule\kern2pt
     }
  }{% \end{breakablealgorithm}
     \kern2pt\hrule\relax% \@fs@post for \@fs@ruled
   \end{center}
  }
\theoremstyle{plain}
\newtheorem{theorem}{Theorem}
\newtheorem{proposition}[theorem]{Proposition}
\newtheorem{lemma}[theorem]{Lemma}
\theoremstyle{definition}
\newtheorem{definition}[theorem]{Definition}
\newtheorem{example}[theorem]{Example}
\newtheorem{notation}[theorem]{Notation}
\newtheorem{remark}[theorem]{Remark}
\definecolor{cornellred}{rgb}{0.7, 0.11, 0.11}
\definecolor{payne\'sgrey}{rgb}{0.25, 0.25, 0.28}
\definecolor{trolleygrey}{rgb}{0.5, 0.5, 0.5}
\definecolor{sapgreen}{rgb}{0.31, 0.49, 0.16}
\newcommand{\red}[1]{{\color{red}#1}}
\newcommand{\grey}[1]{{\color{payne\'sgrey}#1}}
\newcommand{\green}[1]{{\color{sapgreen}#1}}
\newcommand{\cornellred}[1]{{\color{cornellred}#1}}
\newcommand{\Gr}[2]{\mathbb{G}({#1},{#2})}
\DeclareMathOperator{\img}{img}
\DeclareMathOperator{\Sym}{Sym}
\newcommand{\cC}{\mathcal{C}}
\newcommand{\wh}{\mathbf{h}}
\newcommand{\wv}{\mathbf{v}}
\newcommand{\ww}{\mathbf{w}}
\newcommand{\vh}{\vec{\mathbf{h}}}
\newcommand{\vv}{\vec{\mathbf{v}}}
\newcommand{\PP}{\mathbb{P}}
\begin{document}

\title{Skew-Symmetric Tensor Decomposition}
%\title[Skew-Symmetric]{Skew-Symmetric Decomposition}
\author[E. Arrondo]{Enrique Arrondo}
\address{Departamento de \'{A}lgebra, Geometr\'{\i}a y Topolog\'{\i}a, Facultad de Ciencias Matem\'{a}ticas, Universidad Complutense de Madrid, 28040 Madrid, Espa\~{n}a}
\email{arrondo@mat.ucm.es}
\author[A. Bernardi]{Alessandra Bernardi}
\address{Dipartimento di Matematica, Universit\`{a} di Trento, Via Sommarive, 14, 38123 Povo, Italia}
\email{alessandra.bernardi@unitn.it}
\author[P. Macias Marques]{Pedro Macias Marques}
\address{Departamento de Matem\'{a}tica, Escola de Ci\^{e}ncias e Tecnologia, Centro de Investiga\c{c}\~{a}o em Matem\'{a}tica e Aplica\c{c}\~{o}es, Instituto de Investiga\c{c}\~{a}o e Forma\c{c}\~{a}o Avan\c{c}ada, Universidade de \'{E}vora, Rua Rom\~{a}o Ramalho, 59, P--7000--671 \'{E}vora, Portugal.}
\email{pmm@uevora.pt}
\author[B. Mourrain]{Bernard Mourrain}
\address{Inria Sophia Antipolis M\'{e}diterran\'{e}e, AROMATH, 2004 route des Lucioles, 06902 Sophia Antipolis, France}
\email{Bernard.Mourrain@inria.fr}
\subjclass[2010]{Primary:15A75; secondary: 14M15, 16W22, 16Z05}
\keywords{skew-symmetric rank, apolarity, tensor decomposition}

\begin{abstract} We introduce the ``skew apolarity lemma'' and we use it to give algorithms for the skew-symmetric rank and the decompositions of tensors in {$\bigwedge^dV_{\mathbb{C}}$ with $d\leq 3$ and $\dim V_{\mathbb{C}} \leq 8$}. New algorithms to compute the rank and a minimal decomposition of a tritensor are also presented.

\end{abstract}

\maketitle
\section*{Introduction}
The problem of decomposing a structured tensor in terms of its structured rank has been extensively studied in the last decades (\cite{bb13, bb, bbcm, bbo, bdhm, bgi, bknt, ccgo, dot, land, mmsv, oo}). Most of the well-known results are for symmetric tensors, for tensors without any symmetry and for tensors with partial symmetries. In this paper we want to focus on the decomposition of skew-symmetric tensors.

Let $V$ be a vector space of dimension $n+1$ defined over an algebraically closed field $K$ of characteristic zero. Given an element $t\in \bigwedge^d V$, how can we find vectors $v_i^{(j)}\in V$  and $\lambda_i\in K$ in such a way that the following decomposition 
\begin{equation}
t= \sum_{i=1}^r \lambda_i v_i^{(1)}\wedge \cdots \wedge v_i^{(d)}
\end{equation}
involves the minimum possible number of summands up to scalar multiplication?

One can look at this problem from many perspectives. Form the algebraic geometry point of view it corresponds to finding the minimum number $r$ of distinct points on a Grassmannian $\mathbb{G}(d,V)$ whose span contains the given tensor $t$. We call $r$ the \emph{skew-symmetric rank} of the tensor $t$.
From a physical point of view this problem can be rephrased in terms of measurement of the entanglement of fermionic states (see eg. \cite{lh, gr,z, bv}).

The strategy that we will pursue in our manuscript wants to follow the classical algebraic technique that is used for the decomposition of symmetric tensors (which has also a physical interpretation in terms of entanglement of bosonic states, \cite{eb, bc}). Namely, we will define the skew-apolarity action (Section \ref{Preliminaries}) which will allow to build up the most closest concept of an ideal of points that one can have in the skew-symmetric algebra. We will present few examples of skew ``ideals'' of points in Section \ref{a:few:ex}. In Section \ref{3vector} we give our complete analysis for tensors ${t\in \bigwedge^3 V}$ with ${\dim V \leq 8}$.

The idea of finding a skew-symmetric version of apolarity in order to extend some of the results which are known for the symmetric setting to the skew-symmetric one is not new. The novelty of our aproach is the context we use and the way we extend this notion. In \cite{cgg} an analogous of apolarity action is defined: their apolar of a subpace ${Y\subseteq \bigwedge^k V}$ is the subset ${Y^{\perp}\subseteq \bigwedge^{n+1-k} V}$ of elements $w$ satisfiying ${w\wedge t=0}$, for all ${t\in Y}$. That was enough for their purpose since they were interested in the dimension of secant varieties of Grassmannians, while for our purpose we need the whole description of all the elements in $\bigwedge^{\bullet}V$ annihilating the given tensor.  The idea of extending apolarity to contexts different from the classical symmetric one has been pursued by various authors especially in the multi-homogeneous context (e.g. \cite{gal,grv,bbg,be}) but we are not aware of any other apart from \cite{cgg} where it has been investigated in the skew-symmetric context.

We use the skew-apolarity to determine the rank of skew-tensors in $\bigwedge^dV_{\mathbb{C}}$ for ${d\leq 3}$ and ${\dim V \leq 8}$. This is based on the normal form classifications in \cite{Sch,gur1}. The different cases are characterized in terms of the kernels of skew-catalecticant maps, which leads to new algorithms for the decomposition of trivectors in dimension up to $7$.

In the next section, we introduce the notation and prove the skew-apolarity Lemma. In Section ~\ref{a:few:ex}, some examples of ideals of points in the
skew-symmetric algebra are presented and analysed.
In Section \ref{3vector}, some properties of rank-one skew
symmetric tensors are described, that are used in the analysis hereafter.
In Section \ref{sec:trivector}, we give our complete analysis for tensors
${t\in \bigwedge^3 V}$ with ${\dim V \leq 8}$, including algorithms
for their decomposition.

\section{Preliminaries}\label{Preliminaries}

Let us briefly recall what is known in the symmetric case for a minimal decomposition of an element ${f\in S^dV}$ as ${f=\sum_{i_1}^r v_i^{\otimes d}}$.

First of all, observe that we can consider ${S:=\Sym V}$ as a ring of homogeneous polynomials ${S:= K[x_0,\ldots ,x_n]}$ and let ${R:=\Sym V^{*}= K[y_0,\ldots ,y_n]}$ be its dual ring acting on $S$ by differentiation:  
\begin{equation} 
y_j(x_i)=\frac{d}{d x_j}(x_i)=\delta_{ij}. 
\end{equation}
The action above is classically know as \emph{apolarity}.

Denote ${f^{\perp}=\{ g\in R \, | \, g(f)=0 \} \subset R}$ the annihilator of a homogeneous polynomial ${f\in S}$ and remark that it is an ideal of $R$.

\begin{definition}
A subscheme ${X\subset \mathbb{P}(S^1V)}$ is apolar to ${f\in S}$ if its homogeneous ideal ${I_X\subset R}$ is contained in the annihilator of $f$.
\end{definition} 

Useful tools to get the apolar ideal of a polynomial ${f\in S^dV}$ are the well\-\mbox{-known} \emph{catalecticant matrices} which are defined to be the matrices associated to the maps ${C_f^{i,d-i} \in \mathrm{Hom}(S^iV^*, S^{d-i}V)}$, such that ${C_f^{i,d-i}(y_0^{\,i_0}\cdots y_n^{\,i_n})= \frac{\partial^i}{\partial {x_0^{\,i_0} \cdots x_n^{\,i_n} }}(f)}$ with ${\sum_{j=0}^n i_j=i}$, for ${i=0, \ldots , d}$.

The annihilator of a power ${l^d\in S^dV}$ of a linear form ${l\in S^1V}$ is the ideal of the corresponding point ${[l]^{*}\in \mathbb{P}(V^{*})}$ in degree at most $d$.

The following Lemma is classically known as Apolarity Lemma (\cite{IK,dol}).

\begin{lemma}[Apolarity Lemma, \cite{IK,dol}]\label{ApolarityLemma} 
A homogeneous polynomial ${f\in S^dV}$ can be written as ${f=\sum_{i=1}^r a_il_i^{\,d}}$, with ${l_1, \ldots , l_r}$ linear forms, ${a_1,\ldots,a_r\in K}$, if and only if the ideal of the scheme  ${X=\{ [l_1^*], \ldots , [l_r^*]\}\subset \mathbb{P}(R)}$ is contained in $f^{\perp}$.
\end{lemma}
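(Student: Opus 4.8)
The plan is to reduce everything to the perfect pairing between $S^dV$ and $R_d=S^dV^*$ induced by the apolarity action, $\langle\,\cdot\,,\,\cdot\,\rangle\colon R_d\times S^dV\to K$ given by $(g,f)\mapsto g(f)$; this pairing is nondegenerate precisely because differentiation identifies $R_d$ with the dual of $S^dV$, and its matrix in the monomial bases is exactly the top catalecticant $C_f^{d,0}$ discussed above. Writing $U^{\perp}$ for the annihilator of a subspace under this pairing, I expect both implications to follow by tracking only the single degree-$d$ graded piece of each ideal, since $f$ itself lives in $S^dV$.

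For the forward implication I would first record that the homogeneous ideal of the reduced scheme $X=\{[l_1^*],\dots,[l_r^*]\}$ is the intersection $I_X=\bigcap_{i=1}^r I_{[l_i^*]}$. By the fact recalled just above the statement, in degree at most $d$ each $I_{[l_i^*]}$ agrees with the annihilator $(l_i^d)^{\perp}$, while in higher degrees $(l_i^d)^{\perp}$ is all of $R$; hence $I_{[l_i^*]}\subseteq (l_i^d)^{\perp}$ as ideals. Any $g\in I_X$ therefore satisfies $g(l_i^d)=0$ for every $i$, and by linearity of the action $g(f)=\sum_{i=1}^r a_i\,g(l_i^d)=0$, that is $g\in f^{\perp}$. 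This yields $I_X\subseteq f^{\perp}$ with no restriction on the degree.

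For the converse I would set $W=\langle l_1^d,\dots,l_r^d\rangle\subseteq S^dV$ and compute its annihilator in degree $d$. Using the same identification $(I_{[l_i^*]})_d=\bigl((l_i^d)^{\perp}\bigr)_d$, one obtains $(I_X)_d=\bigcap_{i=1}^r\bigl((l_i^d)^{\perp}\bigr)_d=W^{\perp}$, the annihilator of $W$ inside $R_d$. The hypothesis $I_X\subseteq f^{\perp}$, read in degree $d$, says precisely that every element of $W^{\perp}=(I_X)_d$ kills $f$, i.e. $f\in(W^{\perp})^{\perp}$. Since the pairing is perfect, taking annihilators is an inclusion-reversing involution on subspaces, so $(W^{\perp})^{\perp}=W$ and therefore $f\in W$; unwinding the definition of $W$ produces the desired expression $f=\sum_{i=1}^r a_i l_i^d$.

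The only genuinely delicate point is this converse, and within it the perfect-pairing step: one must ensure that $W^{\perp}$ is computed inside $R_d$, so that the double annihilator closes back up to $W$, and that the hypothesis --- a priori an inclusion of ideals in all degrees --- is invoked only in degree $d$, where $f$ lives. The remaining degrees carry no information about $f$, since in degrees above $d$ both $(l_i^d)^{\perp}$ and $f^{\perp}$ equal the whole of $R$, and in degrees below $d$ the condition imposes nothing on $f$. Thus the reduction to the degree-$d$ catalecticant is both legitimate and the heart of the argument.
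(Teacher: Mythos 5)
The paper itself gives no proof of this lemma---it is quoted as classical, with references \cite{IK,dol}---so there is no internal proof to measure you against; your argument is correct and is essentially the standard one. It also matches, step for step, the technique the paper uses for its own Skew-Apolarity Lemma: your forward direction is the same linearity observation (each $I_{[l_i^*]}$ sits inside $(l_i^d)^{\perp}$ because the annihilator is all of $R$ above degree $d$), and your converse---reading the hypothesis only in degree $d$, identifying $(I_X)_d$ with the annihilator of $W=\langle l_1^d,\dots,l_r^d\rangle$ in $R_d$, and closing up via $(W^{\perp})^{\perp}=W$---is exactly the paper's implication from its degree-$d$ condition to the decomposition, phrased there geometrically as: elements of the degree-$d$ piece of the ideal are hyperplanes through the points, so if every such hyperplane also contains the tensor, the tensor lies in the span of the points. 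One cosmetic slip worth fixing: the matrix of the pairing $R_d\times S^dV\to K$ in monomial bases is a diagonal matrix of multinomial-type coefficients, not the catalecticant $C_f^{d,0}$; for a fixed $f$, the map $C_f^{d,0}$ is the linear functional $g\mapsto g(f)$, i.e.\ the pairing with $f$ in one slot. This does not affect the proof, since all you use is nondegeneracy of the pairing, which holds in characteristic zero as the paper assumes.
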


We want to make an analogous construction for the skew-symmetric case.
\begin{definition}\label{skew-apolarity}
Let  ${\wh_{{\{1, \ldots , i\}}}=h_1 \wedge \cdots \wedge h_i\in \bigwedge^iV^*}$ and ${\wv_{\{1, \ldots , i\}}=v_1 \wedge \cdots \wedge v_i \in \bigwedge^iV}$  be two  elements of skew-symmetric rank $1$. For these elements the \emph{skew-apolarity} action is defined as the  determinant among $\bigwedge^i V^*$ and $\bigwedge^iV$:
\begin{equation}\label{dec}
\wh_{{\{1, \ldots , i\}}}(\wv_{{\{1, \ldots , i\}}})= \begin{vmatrix}
h_1(v_1) & \cdots & h_1(v_i)\\
\vdots && \vdots \\
h_i(v_1) & \cdots & h_i(v_i)
\end{vmatrix}
\end{equation}
\end{definition}
This can be compared with the notion of ``flattenings'' introduced by J.M. Landsberg in \cite[Section 3.4]{land}: in the language of that book, the determinant of our Definition \ref{skew-apolarity} as a space of equations is  $\bigwedge^iV^* \otimes \bigwedge^iV^*$.

\begin{definition}
For any ${\wv_{\{1, \ldots , d\}}=v_1\wedge \cdots \wedge v_d}$ in $\bigwedge^d V$ and ${\wh_{\{1, \ldots , s\}}=h_1 \wedge \cdots \wedge h_s}$ in $\bigwedge^sV^*$, with ${0\le s\le d}$, we set
\begin{equation}\label{dot} 
\wh_{\{1, \ldots , s\}} \cdot \wv_{\{1, \ldots , d\}} : = 
\sum_{\substack{R \subset \{1, \ldots , d \} \\ \lvert R\rvert=s}} 
\mathrm{sign}(R) \cdot \wh_{\{1, \ldots , s\}}(\wv_{R})\wv_{\bar{R}},
\end{equation}
where $\wh_{\{1, \ldots , s\}}(\wv_{R})$ is defined as in \eqref{dec} and ${\bar{R}=\{1, \ldots , d\}\setminus R}$. We define the skew-apolarity action extending this by linearity. Now we can define the \emph{skew-catalecticant matrices} ${\mathcal{C}_t^{s,d-s}\in \mathrm{Hom}\big(\bigwedge^sV^*, \bigwedge^{d-s}V\big)}$  associated to any element ${t\in \bigwedge^dV}$ as ${\mathcal{C}_t^{s,d-s}(\wh)= \wh \cdot t}$.
\end{definition}

The skew-symmetric action can be defined intrinsically in terms of co-products and inner product (see \cite{BourbakiAlgebraChapters132008}[A.III, p. 600,603]). See also \cite{DoubiletFoundationsCombinatorialTheory1974} where a geometric calculus is developed using this ``meet'' operator and its dual ``join'' operator.

We would like to thank M. Brion for the following remark.

\begin{remark}\label{Brion} It's worth noting that the above definition of skew-apolarity action is coordinate free. In fact  that action corresponds to the projection of $\bigwedge^sV \otimes \bigwedge^dV$ to $\bigwedge^{d-s}V$ which sends the unique copy of $\bigwedge^dV$ to $\bigwedge^{d-s}V$. Moreover, the fact that there is a unique copy of that irreducile Schur representation also shows that this is the unique way to define such a skew-apolarity action.
\end{remark}

\begin{notation} 
Let ${\mathbb{G}(d,V)\subset \mathbb{P}\bigl(\bigwedge^{d}\mathbb{V}\bigr)}$ be  the Grassmannian  of %$\mathbb{P}(\mathbb{C}^d)\subset \mathbb{P}(V)$. 
$d$-dimensional linear spaces in $V$.

If $\wv\in \bigwedge^dV$, we use the notation $[\wv]$ to indicate its projectivization in $ \mathbb{P}\bigl(\bigwedge^{d}\mathbb{V}\bigr)$.

For any element $[\wv]\in \mathbb{G}(d,V) $ we denote by $\vv$ the corresponding vector space of dimension $d$, i.e.\ $\vv\simeq \mathbb{C}^d\subset V$. 

Let ${t\in \bigwedge^dV}$ and denote by ${t^{\perp}\subset \bigwedge^{\bullet}V^*}$ its orthogonal via the product defined in \eqref{dot}, i.e.\ 
\[
t^{\perp}:= \bigl\{ \textstyle{\wh \in \bigwedge^{i\le n+1} V^*} \mid \wh \cdot t =0 \bigr\}.
\]

If ${[\wh]\in  \mathbb{G}(s,V^*)}$, then  $\vh^{\perp}$ is the  vector space of $V$ of dimension ${n+1-s}$ orthogonal to $\wh$.
\end{notation}

\begin{lemma}\label{inter}
Consider ${\wv = v_{1}\wedge \cdots \wedge v_{d}\in \bigwedge^d V}$ and ${\wh= h_{1}\wedge \cdots \wedge h_{s} \in \bigwedge^s V^{*}}$, with ${0\le s\le d}$, such that the subspaces  $\vh^{\perp}$ and $\vv$ intersect properly in $V$. Then the catalecticant satisfies ${\cC_{\wv}^{s,d-s}(\wh)=\wh\cdot \wv = u_{1}\wedge \cdots \wedge u_{d-s}}$ where ${u_{1},\ldots, u_{d-s}}$ is a basis of ${\vh^{\perp} \cap \vv}$. 
\end{lemma}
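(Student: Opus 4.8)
The plan is to reduce to a basis of $\vv$ adapted to the intersection $\vh^\perp\cap\vv$ and then observe that the defining sum \eqref{dot} collapses to a single term. First I would record the numerology forced by the proper-intersection hypothesis: since $\dim\vv=d$, $\dim\vh^\perp=n+1-s$ and $\dim V=n+1$, a proper intersection means $\dim(\vh^\perp\cap\vv)=(n+1-s)+d-(n+1)=d-s$, which is exactly the number of factors $u_1,\ldots,u_{d-s}$ claimed on the right-hand side. The key reduction is that, because $\cC_{\wv}^{s,d-s}(\wh)=\wh\cdot\wv$ is by definition linear in $\wv$, replacing $v_1,\ldots,v_d$ by any other basis of $\vv$ only rescales $\wv$, and hence $\wh\cdot\wv$, by the determinant of the change-of-basis matrix. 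That scalar can be absorbed into the factors $u_i$, so it suffices to compute $\wh\cdot\wv$ for one convenient choice of basis.

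Next I would choose a basis $v_1,\ldots,v_d$ of $\vv$ for which $v_1,\ldots,v_{d-s}$ form a basis of $\vh^\perp\cap\vv$ and $v_{d-s+1},\ldots,v_d$ complete it. For this basis $h_i(v_j)=0$ for all $1\le i\le s$ and $1\le j\le d-s$, since each $v_j\in\vh^\perp$ annihilates every $h_i$. Now inspecting \eqref{dot}, for any subset $R$ with $\lvert R\rvert=s$ containing some index $j\le d-s$ the corresponding column of the matrix in \eqref{dec} defining $\wh(\wv_R)$ is identically zero, so $\wh(\wv_R)=0$. Hence the only surviving summand is $R=\{d-s+1,\ldots,d\}$, with complement $\bar R=\{1,\ldots,d-s\}$, giving
\[
\wh\cdot\wv=c\,v_1\wedge\cdots\wedge v_{d-s},\qquad c=\mathrm{sign}(R)\cdot\det\bigl(h_i(v_{d-s+j})\bigr)_{i,j=1}^{s}.
\]

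Finally I would verify that $c\neq0$, which is where the proper-intersection hypothesis is truly used. The linear map $\phi\colon\vv\to K^s$, $v\mapsto(h_1(v),\ldots,h_s(v))$, has kernel $\vv\cap\vh^\perp$ of dimension $d-s$, hence is surjective; therefore the images of $v_{d-s+1},\ldots,v_d$ form a basis of $K^s$ and the matrix $\bigl(h_i(v_{d-s+j})\bigr)$ is invertible, so $c\neq0$. Setting $u_1=v_1,\ldots,u_{d-s-1}=v_{d-s-1}$ and $u_{d-s}=c\,v_{d-s}$ then yields $\wh\cdot\wv=u_1\wedge\cdots\wedge u_{d-s}$ with $u_1,\ldots,u_{d-s}$ a basis of $\vh^\perp\cap\vv$, as claimed. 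The main obstacle is essentially bookkeeping: making the change-of-basis reduction precise and confirming that proper intersection forces the surviving determinant to be nonzero; the collapse of the sum itself is immediate once the adapted basis is in place.
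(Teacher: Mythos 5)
Your proof is correct and takes essentially the same route as the paper's: both reduce by multilinearity to a basis of $\vv$ adapted to $\vh^{\perp}\cap\vv$, observe that the sum in \eqref{dot} collapses to the single term $R=\{d-s+1,\ldots,d\}$, and use the proper-intersection hypothesis to see the surviving coefficient is nonzero before absorbing it into one of the factors $u_i$. Your explicit surjectivity argument for the map $v\mapsto(h_1(v),\ldots,h_s(v))$ merely spells out the nonvanishing of $\wh(\wv_{\{d-s+1,\ldots,d\}})$, which the paper asserts directly.
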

\begin{proof}
By multilinearity and skew-symmetry of relation \eqref{dot} in $h_{i}$ and $v_{i}$, we can assume that ${v_{1}=u_{1},\ldots, v_{d-s}=u_{d-s}}$ is a basis of  ${\vh^{\perp} \cap \vv}$. Since ${\wh(\wv_{R})=0}$ if ${R\cap \{1, \ldots, d-s\} \neq \emptyset}$, we have
\[
\cC_{\wv}^{s,d-s}(\wh)=\wh\cdot \wv = (-1)^{s(d-s)} \wh(\wv_{\{d-s+1, \ldots, d\}})\, u_{1}\wedge \cdots \wedge u_{d-s}.
\] 
As the intersection $\vh^{\perp} \cap \vv$  is proper, we have $\wh(\wv_{\{d-s+1, \ldots, d\}})\neq 0$, which proves the result replacing $u_{1}$ by $(-1)^{s(d-s)} \wh(\wv_{\{d-s+1, \ldots, d\}}) u_{1}$.
\end{proof}

\begin{lemma}\label{kerimg}
  For $\wv = v_{1}\wedge \cdots \wedge v_{d}\in \bigwedge^d V$, with ${0\le s\le d}$, 
\[ 
\ker \cC_{\wv}^{s,d-s}= (\vv^{\perp})_{s},\  \img \cC_{\wv}^{s,d-s}= {\textstyle \bigwedge^{d-s} \vv}.
\]
\end{lemma}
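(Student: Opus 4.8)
The plan is to compute the catalecticant on a basis adapted to $\vv$ and read off both the kernel and the image directly. Since the statement is vacuous when $\wv=0$, assume $v_1,\ldots,v_d$ are linearly independent, so that $\vv=\langle v_1,\ldots,v_d\rangle$ has dimension $d$. First I would choose a basis $e_1,\ldots,e_{n+1}$ of $V$ with $\vv=\langle e_1,\ldots,e_d\rangle$; absorbing a nonzero scalar we may take $\wv=e_1\wedge\cdots\wedge e_d$. Let $\epsilon_1,\ldots,\epsilon_{n+1}$ be the dual basis of $V^*$, so that the linear annihilator is $\vv^{\perp}=\langle\epsilon_{d+1},\ldots,\epsilon_{n+1}\rangle$ and its degree-$s$ component $(\vv^{\perp})_s$ is spanned by those $\epsilon_I=\epsilon_{i_1}\wedge\cdots\wedge\epsilon_{i_s}$, with $I=\{i_1<\cdots<i_s\}$, for which $I\not\subseteq\{1,\ldots,d\}$.

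Next I would evaluate $\cC_{\wv}^{s,d-s}$ on the basis $\{\epsilon_I\}_{|I|=s}$ of $\bigwedge^s V^*$ through \eqref{dot}. By the determinant formula \eqref{dec} together with $\epsilon_i(e_j)=\delta_{ij}$, the scalar $\epsilon_I(\wv_R)=\det\bigl(\epsilon_{i_a}(e_{r_b})\bigr)$ vanishes unless $I=R$ as sets, in which case it equals $1$; in particular it can be nonzero only when $I\subseteq\{1,\ldots,d\}$. Reading off \eqref{dot}, this gives $\epsilon_I\cdot\wv=0$ whenever $I\not\subseteq\{1,\ldots,d\}$, while for $I\subseteq\{1,\ldots,d\}$ only the term $R=I$ survives and $\epsilon_I\cdot\wv=\mathrm{sign}(I)\,e_{\{1,\ldots,d\}\setminus I}$, a nonzero multiple of a standard basis multivector of $\bigwedge^{d-s}\vv$.

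From this the two claims follow at once. The image is spanned by the multivectors $e_J$ with $J=\{1,\ldots,d\}\setminus I\subseteq\{1,\ldots,d\}$ and $|J|=d-s$; as $I$ runs over the $s$-subsets of $\{1,\ldots,d\}$, $J$ runs bijectively over the $(d-s)$-subsets, so these are exactly the standard basis of $\bigwedge^{d-s}\vv$, giving $\img\cC_{\wv}^{s,d-s}=\bigwedge^{d-s}\vv$. For the kernel, the basis forms $\epsilon_I$ split into those with $I\subseteq\{1,\ldots,d\}$, sent to distinct basis multivectors, and those with $I\not\subseteq\{1,\ldots,d\}$, sent to $0$; hence the kernel is precisely the span of the latter, namely $(\vv^{\perp})_s$. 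A dimension count confirms this, both sides having dimension $\binom{n+1}{s}-\binom{d}{s}$.

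The calculation is routine once the basis is adapted: each standard form $\epsilon_I$ maps to a single monomial rather than a sum, so distinct $I\subseteq\{1,\ldots,d\}$ yield distinct basis multivectors and no cancellation can occur, making the rank exactly $\binom{d}{s}$. The only points that genuinely require care are the correct reading of the determinant \eqref{dec} inside the sum \eqref{dot} (so that $\epsilon_I(\wv_R)=\delta_{I,R}$ as claimed) and pinning down $(\vv^{\perp})_s$ as the degree-$s$ component of the ideal generated by the linear annihilator $\vv^{\perp}\subseteq V^*$, which is exactly the span of the surviving $\epsilon_I$. As an intrinsic alternative for the image inclusion one can instead invoke Lemma~\ref{inter}: for decomposable $\wh$ with $\vh^{\perp}$ meeting $\vv$ properly, $\wh\cdot\wv$ is a basis vector of $\vh^{\perp}\cap\vv\subseteq\vv$, hence lies in $\bigwedge^{d-s}\vv$, and every decomposable element of $\bigwedge^{d-s}\vv$ is realized by choosing $\wh$ dual to a complement of the corresponding subspace inside $\vv$.
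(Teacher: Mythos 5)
Your proof is correct, and it takes a different route from the paper's. You diagonalize the catalecticant outright: in a basis adapted to $\vv$, each standard covector $\epsilon_I$ is sent either to $0$ (when $I\not\subseteq\{1,\ldots,d\}$) or to a signed standard basis multivector $\pm e_{\{1,\ldots,d\}\setminus I}$ of $\bigwedge^{d-s}\vv$, and since distinct surviving $I$ hit distinct basis vectors, no cancellation is possible and both the kernel and the image are read off simultaneously and exactly — your identification of $\epsilon_I(\wv_R)=\delta_{I,R}$ via \eqref{dec} inside \eqref{dot}, and of $(\vv^{\perp})_s$ as the span of the $\epsilon_I$ with $I\not\subseteq\{1,\ldots,d\}$, are the two points that need care and you handle both. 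The paper argues differently: it obtains $\img \cC_{\wv}^{s,d-s}=\bigwedge^{d-s}\vv$ by invoking Lemma~\ref{inter} (every decomposable $u_1\wedge\cdots\wedge u_{d-s}$ in $\bigwedge^{d-s}\vv$ is realized by a suitable $\wh$, which is your closing ``intrinsic alternative''), proves only the inclusion $(\vv^{\perp})_s\subseteq\ker\cC_{\wv}^{s,d-s}$ by an explicit computation, and then forces equality by rank--nullity, comparing $\binom{n+1}{s}-\binom{d}{s}$ with $\binom{d}{s}$. What each approach buys: yours is self-contained (no appeal to Lemma~\ref{inter} or to a dimension count, which for you is only a consistency check) and exhibits the full matrix of the map as a signed partial permutation matrix of rank $\binom{d}{s}$; the paper's version reuses Lemma~\ref{inter}, which it needs anyway elsewhere, keeps the image argument basis-free on the level of decomposable elements, and trades your explicit bookkeeping for the softer dimension-count step.
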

\begin{proof}
By Lemma \ref{inter}, for any linearly independent set of vectors  ${u_{1},\ldots, u_{d-s}\in \vv}$, one can find linearly independent hyperplanes ${h_{1},\ldots, h_{s}}$ such that
\[
{\cC_{\wv}^{s,d-s}(h_{1}\wedge \cdots \wedge h_{s})= u_{1}\wedge \cdots \wedge u_{d-s}}.
\]
This shows that the image of $\cC_{\wv}^{s,d-s}$ is $\bigwedge^{d-s} \vv$.

For any ${h_{1}\in \vv^{\perp}}$ and any ${h_{2}, \ldots, h_{s}\in V^{*}}$, an explicit computation shows that ${\wh=h_{1}\wedge \cdots \wedge h_{s}\in \bigwedge^{s} V^{*}}$ is such that ${\wh\cdot \wv=0}$. Therefore ${\vv^{\perp}\cap \bigwedge^{s} V^{*}\subset \ker \cC_{\wv}^{s,d-s}}$. The dimension of ${\vv^{\perp}\cap \bigwedge^{s} V^{*}}$ is 
${\dim \bigwedge^{s} V^{*} - \dim \bigwedge^{s} W^{*}}$ where ${W^{*}\oplus \vv^{\perp}=V^{*}}$, that is ${\binom{n+1}{s}-\binom{d}{s}}$. As the dimension of ${\img \cC_{\wv}^{s,d-s}= \bigwedge^{d-s} \vv}$ is ${\binom{d}{d-s}=\binom{d}{s}}$, we deduce that ${\ker \cC_{\wv}^{s,d-s}= \vv^{\perp}\cap \bigwedge^{s} V^{*}}$.
\end{proof}
%\end{BM}

%\begin{definition}
%Assume that $t= \sum_{i=1}^r v_{i,\{1, \ldots , d\}}\in \bigwedge^d V$, with $v_{i,\{1, \ldots , d\}}=v_i^{(1)}\wedge \cdots \wedge v_i^{(d)}$. Let $\wh_{\{1, \ldots , r\}}=h_1 \wedge \cdots \wedge h_r$ be an element of a base of $\bigwedge^rV^*$, and  $0 \leq r \leq d-r \leq \dim V$,  and define
%\begin{equation}\label{dot} \wh_{\{1, \ldots , r\}}\cdot t : =\sum_{i=1}^r \sum_{\footnotesize{\begin{array}{c}R \subset \{1, \ldots , d \}, |R|=r \\ (\overline{R}=\{1, \ldots , d\}\setminus R) \end{array}}} \mathrm{sign}(R) \cdot \wh(v_{i,R})v_{i, \overline{R}}
%\end{equation}
%where $\wh(v_{i,R})$ is defined as in (\ref{dec}).
%Now we can define the \emph{skew-catalecticant matrices} $\mathcal{C}_t^{r,d-r}\in \mathrm{\whom}(\bigwedge^rV^*, \bigwedge^{d-r}V)$  associated to any element $t\in \bigwedge^dV$ as $\mathcal{C}_t^{r,d-r}(\wh)= \wh \cdot u$ as in (\ref{dot}).
%\end{definition}
We want now to prove the skew-symmetric analog of the Apolarity Lemma (Lemma \ref{ApolarityLemma}  c.f. \cite{IK,dol}).

\begin{remark} If ${t\in \bigwedge^dV}$, then ${t^{\perp}=\bigoplus_i \ker (\mathcal{C}_t^{i, d-i})}$. 
\end{remark}

Let us define the skew-symmetric analog of an ideal of points.
\begin{definition} 
Let ${\wv_{i}:=v_i^{(1)}\wedge \cdots \wedge v_i^{(d)} \in \bigwedge^dV}$ for ${i=1 , \ldots , r}$ be $r$ points. We define 
\begin{equation}
I^{\wedge}(\wv_{1}, \ldots , \wv_{r})= \bigcap_{i=1}^{r} \vv_{i}^{\perp}.
\end{equation}
\end{definition}
%\begin{BM}
\begin{remark}\label{rem:ker}If $t= \sum_{i} \wv_{i}$ with $\wv_{i}\in \Gr{d}{V}$, then by Lemma \ref{kerimg} we have
\[
I^{\wedge}(\wv_{1}, \ldots , \wv_{r})_{s}= \bigcap_{i=1}^{r} (\vv_{i}^{\perp})_{s}\subset \ker \cC_{t}^{s,d-s}
\]
for ${s\le d}$. In particular, for ${s=d}$,
\[
\bigcap_{i=1}^{r} (\vv_{i}^{\perp})_{d} = \bigcap_{i=1}^{r}  \ker \cC_{\wv_{i}}^{d,0}=
  \bigl\{ \wh \in {\textstyle \bigwedge^{d} V^{*}}\mid  \wh(\wv_{i})=0, i=1,\ldots, r \bigr\}.
\]
\end{remark}

We can now prove the \emph{Skew-Apolarity Lemma}. The original formulation for the symmetric case is well described both in \cite{IK} and in \cite[Lemma 3.1]{dol}. It is worth noting that the only (crucial) difference between the two Lemmas is the product that in the classical case is the symmetric product while we prove that the same formulation holds also with the  skew-symmetric product. 

\begin{lemma}[Skew-apolarity Lemma] 
Let ${\wv_{i}=  v_i^{(1)}\wedge \cdots \wedge v_i^{(d)}\in \Gr{d}{V}\subset \bigwedge^{d} V}$ and let ${t\in \bigwedge^d V}$. The following are equivalent:
\begin{enumerate}
\item\label{i} The tensor $t$ can be written as ${t= \sum_{i=1}^r a_i \wv_{i}}$, with ${a_1,\ldots,a_r\in K}$;
\item\label{ii} $\bigcap _{i}(\vv_i^{\perp}) \subset (t^{\wedge \perp})$;
\item\label{iii} $\bigcap _{i}(\vv_i^{\perp})_{d}\subset (t^{\wedge \perp})_d$.
\end{enumerate}
\end{lemma}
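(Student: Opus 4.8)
The plan is to prove the three statements equivalent by establishing the cycle $(\ref{i}) \Rightarrow (\ref{ii}) \Rightarrow (\ref{iii}) \Rightarrow (\ref{i})$, where the first two implications are essentially formal and the last one carries the real content. Throughout I will exploit that the skew-apolarity action in top degree, $\cC_t^{d,0}\colon \bigwedge^d V^* \to \bigwedge^0 V = K$, $\wh \mapsto \wh(t)$, is nothing but the perfect pairing of Definition \ref{skew-apolarity} identifying $\bigwedge^d V^*$ with $\bigl(\bigwedge^d V\bigr)^*$; this is precisely what lets the single degree-$d$ condition (\ref{iii}) control the whole decomposition.

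For $(\ref{i}) \Rightarrow (\ref{ii})$ I would argue degree by degree. Fix $s$ and take a homogeneous $\wh \in \bigl(\bigcap_i \vv_i^\perp\bigr)_s = \bigcap_i (\vv_i^\perp)_s$. By Lemma \ref{kerimg} we have $(\vv_i^\perp)_s = \ker \cC_{\wv_i}^{s,d-s}$, so $\wh \cdot \wv_i = 0$ for every $i$. Since $\wh \cdot (-)$ is linear in its tensor argument, $\wh \cdot t = \sum_{i=1}^r a_i\,(\wh \cdot \wv_i) = 0$, whence $\wh \in (t^{\wedge\perp})_s$; for $s > d$ the inclusion is vacuous because $\bigwedge^{d-s}V = 0$. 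The implication $(\ref{ii}) \Rightarrow (\ref{iii})$ is immediate, since (\ref{ii}) asserts a graded inclusion and (\ref{iii}) is merely its degree-$d$ component.

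The substantive step is $(\ref{iii}) \Rightarrow (\ref{i})$, which I would phrase entirely through annihilators under the pairing $\langle\,\cdot\,,\,\cdot\,\rangle\colon \bigwedge^d V^* \times \bigwedge^d V \to K$, $\langle \wh, w\rangle = \wh(w)$. Writing $A^{\circ}$ for the annihilator of a subspace, Remark \ref{rem:ker} identifies the left-hand side of (\ref{iii}) as
\[
\textstyle \bigcap_i (\vv_i^\perp)_d = \bigl\{\wh \in \bigwedge^d V^* \mid \wh(\wv_i)=0,\ i=1,\dots,r\bigr\} = U^{\circ},
\qquad U := \langle \wv_1, \dots, \wv_r\rangle,
\]
while $(t^{\wedge\perp})_d = \ker \cC_t^{d,0} = \{\wh \mid \wh(t)=0\} = \langle t\rangle^{\circ}$. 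Thus (\ref{iii}) reads $U^{\circ} \subseteq \langle t\rangle^{\circ}$. Because the pairing is perfect, taking annihilators is an inclusion-reversing involution on subspaces, so this is equivalent to $\langle t\rangle = (\langle t\rangle^{\circ})^{\circ} \subseteq (U^{\circ})^{\circ} = U$, that is $t \in U$, which is exactly a decomposition $t = \sum_{i=1}^r a_i \wv_i$ and yields (\ref{i}).

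The one point requiring care — and the step I expect to be the main obstacle to write cleanly — is justifying that the degree-$d$ skew-catalecticant is a nondegenerate pairing identifying $\bigwedge^d V^*$ with $\bigl(\bigwedge^d V\bigr)^*$, so that the double-annihilator equality $(U^{\circ})^{\circ} = U$ is available; once this is in place the remainder is pure finite-dimensional duality. It is worth noting that this argument in fact produces the stronger biconditional $(\ref{i}) \Leftrightarrow (\ref{iii})$ directly, with (\ref{ii}) sandwiched in between, so that no information is lost by testing skew-apolarity only in the top degree $d$.
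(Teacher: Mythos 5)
Your proposal is correct and takes essentially the same approach as the paper: the forward implications follow from linearity of the skew-apolarity action, and your double-annihilator argument for (\ref{iii})$\Rightarrow$(\ref{i}) under the perfect pairing ${\bigwedge^d V^*\times\bigwedge^d V\to K}$ is exactly the paper's argument that every hyperplane through the points ${\wv_1,\dots,\wv_r}$ passes through $t$, hence ${t\in\langle \wv_1,\dots,\wv_r\rangle}$, merely recast in linear-algebraic rather than projective language. The nondegeneracy you flag as the delicate point is immediate, since on dual bases ${\{e_I^*\}}$ and ${\{e_J\}}$ (with ${\lvert I\rvert=\lvert J\rvert=d}$) the determinant pairing of Definition \ref{skew-apolarity} gives ${e_I^*(e_J)=\delta_{IJ}}$.
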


\begin{proof} 
The fact that (\ref{i}) implies (\ref{ii}) and that (\ref{ii}) implies (\ref{iii}) is obvious.
Let us prove that (\ref{iii}) implies (\ref{i}). Observe that any non-zero element $\wh\in I^{\wedge}(\wv_{1}, \ldots, \wv_r)_d\subset \bigwedge^{d}V^{*}$ can be seen as a hyperplane in $\bigwedge^d V$. By Remark \ref{rem:ker},  $I^{\wedge}(\wv_{1}, \ldots, \wv_r)_d$ is the set of hyperplanes which contain the points $\wv_i\in \Gr{d}{V}$ for $i=1, \ldots ,r$. Now, condition (\ref{iii}) is equivalent to saying that any such $\wh$ contains the point $t$. Therefore, up to scalar multiplications, $t$ can be written as a linear combination of the $\wv_j$'s for $j=1, \ldots , r$.
\end{proof}

Comparing the statements of the Skew-apolarity Lemma and the Apolarity Lemma, we see the similarities, if we observe that $\bigcap _{i}(\vv_i^{\perp})$ plays the role of the ideal of the scheme  ${X=\{ [l_1^*], \ldots , [l_r^*]\}}$.

\begin{remark}\label{essentialvars}
Like in the symmetric case, one can define \emph{essential variables}  for an element $t\in\bigwedge^dV$ to be a basis of the smallest vector subspace $W\subseteq V$ such that $t\in\bigwedge^dW$ (it is a classical concept but for modern references see \cite{IK,c}). We can check this by computing the kernel of the first catalecticant $\cC_{t}^{1,d-1}$.
\end{remark}

\section{A few examples of ideals of points}\label{a:few:ex}

Once the skew-symmetric apolar ideal has been defined, one could be interested in studying the analogous of the Hilbert function.
A first obvious observation is that the situation is intrinsically very different from the symmetric case, where any ideal has elements in any degree greater or equal to the degree of the smallest generator, while in the skew-symmetric case we won't have tensors in degree higher than the dimension of ${V}$.
It is not the purpose of this paper to give an exhaustive description of the Hilbert function of zero-dimensional schemes in the skew-symmetric situation, but we would like to present some first examples in the cases of ideal of points. 

A first example where things are very different from the symmetric case is the following. In the symmetric case the Hilbert function of $r$ generic points is $r$ for any degree  ${d\geq r-1}$, while, as we are going to see in the next Lemma, if ${rd\le n+1=\dim V}$ the ideal of $r$ points in the skew-symmetric case is generated in degrees 1 and 2 only.

\begin{lemma}\label{lemma:few:general:points}
Let $d$ and $r$ be positive integers such that ${rd\le n+1=\dim V}$, and let $[\wv_1], \ldots , [\wv_r] \in\Gr{d}{V}$ be $r$ general points. Then $I^{\wedge}(\wv_1,  \ldots, \wv_r)$ is generated in degrees $1$ and $2$. {In fact, with these hypotheses, generators of degree 1 occur only if $rd<n+1$, while if $rd=n+1$ then $I^{\wedge}(\wv_1, \ldots , \wv_r)$ is generated only in degree 2.}
\end{lemma}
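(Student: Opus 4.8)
The plan is to reduce the whole statement to a transparent monomial computation after normalizing the configuration by a change of basis. Since $rd\le n+1$ and the points are general, the subspaces $\vv_1,\ldots,\vv_r$ are in direct sum, so $\dim(\vv_1+\cdots+\vv_r)=rd$; this is an open condition on $\Gr{d}{V}^{\times r}$ and is satisfied by coordinate subspaces, hence it holds generically, and ``general'' may be taken to mean exactly this. Any such direct-sum configuration is carried to a fixed coordinate model by a single element of $GL(V)$: I would choose a basis $e_1,\ldots,e_{n+1}$ of $V$ with $\vv_i=\langle e_{(i-1)d+1},\ldots,e_{id}\rangle$ for $i=1,\ldots,r$, the remaining vectors $e_{rd+1},\ldots,e_{n+1}$ spanning a complement. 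Because $GL(V)$ acts on $\bigwedge^{\bullet}V^{*}$ preserving the grading and carrying $I^{\wedge}$ of one configuration to that of its image, it suffices to prove the statement in this coordinate model.

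In these coordinates I would first describe each $\vv_i^{\perp}$ explicitly. By Lemma \ref{kerimg} its linear part $(\vv_i^{\perp})_1=\ker\cC_{\wv_i}^{1,d-1}=\mathrm{Ann}(\vv_i)$ is spanned by the dual vectors $e_k^{*}$ with $k$ outside the $i$-th block $B_i=\{(i-1)d+1,\ldots,id\}$. Since $\vv_i^{\perp}$ is an ideal it contains the ideal generated by $\mathrm{Ann}(\vv_i)$, and the dimension count of Lemma \ref{kerimg}, namely $\dim(\vv_i^{\perp})_s=\binom{n+1}{s}-\binom{d}{s}$, matches that of this generated ideal in every degree, so the two coincide. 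Hence $\vv_i^{\perp}$ is the monomial ideal which, in the basis of wedge monomials $e_S^{*}:=\bigwedge_{k\in S}e_k^{*}$, is spanned by those $e_S^{*}$ with $S\not\subseteq B_i$. As the intersection of monomial ideals is monomial, $I^{\wedge}(\wv_1,\ldots,\wv_r)=\bigcap_i\vv_i^{\perp}$ is spanned by the monomials $e_S^{*}$ such that $S$ is contained in no single block $B_i$.

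The combinatorial core is then to show that every such monomial of degree at least $2$ is a multiple of a generator of degree $1$ or $2$. Writing $E=\{rd+1,\ldots,n+1\}$ for the indices lying in no block, a monomial $e_S^{*}$ lies in the intersection precisely when either $S\cap E\neq\emptyset$ or $S$ meets at least two blocks. In the first case, choosing $k\in S\cap E$ exhibits $e_S^{*}$ as $\pm\, e_k^{*}\wedge e_{S\setminus\{k\}}^{*}$ with $e_k^{*}\in(I^{\wedge})_1$, so $e_S^{*}$ lies in the ideal generated in degree $1$; in the second case, choosing $j,k\in S$ in distinct blocks exhibits $e_S^{*}$ as a multiple of the degree-$2$ element $e_j^{*}\wedge e_k^{*}$, which itself lies in the intersection. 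Since the intersection is spanned by monomials, this proves it is generated in degrees $1$ and $2$.

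Finally I would read off the refinements from the same description. The degree-$1$ part is $\operatorname{span}\{e_k^{*}\mid k\in E\}$, of dimension $n+1-rd$, so degree-$1$ generators occur exactly when $E\neq\emptyset$, that is when $rd<n+1$. When $rd=n+1$ we have $E=\emptyset$, the first case above never arises, and the monomials $e_j^{*}\wedge e_k^{*}$ with $j,k$ in distinct blocks are genuine degree-$2$ generators (they cannot be produced in degree $1$, having no factor indexed by $E$); hence the ideal is then generated in degree $2$ alone. The only step requiring real care is the normalization of the first paragraph, where one must verify that generality forces the direct-sum condition and that passing to the coordinate model via $GL(V)$ is harmless; once that reduction is made rigorous, the remainder is the transparent monomial bookkeeping above, and I expect no further obstacle.
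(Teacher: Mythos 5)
Your proposal is correct and takes essentially the same approach as the paper: both normalize the general configuration to coordinate blocks $\wv_i=e_{(i-1)d+1}\wedge\cdots\wedge e_{id}$ and then reduce to monomial bookkeeping, identifying $I^{\wedge}(\wv_1,\ldots,\wv_r)$ as the span of the wedge monomials $e_S^{*}$ with $S$ contained in no block, each of which is divisible by a degree-one generator $e_k^{*}$ with $k>rd$ or a degree-two generator $e_j^{*}\wedge e_k^{*}$ with $j,k$ in distinct blocks. Your explicit monomial description of each $\vv_i^{\perp}$ via Lemma \ref{kerimg}, and your justification of the $GL(V)$-normalization, merely spell out in more detail what the paper's proof does through its direct computation of the coefficients of $\wh\cdot\wv_j$.
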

\begin{proof}
Since ${\wv_1,  \ldots, \wv_r}$ are general elements, we can assume that $V$ admits a basis ${e_1,\ldots,e_{n+1}}$ such that
\[
\wv_1=e_1 \wedge \cdots \wedge e_d,\ \wv_2=e_{d+1} \wedge \cdots \wedge e_{2d},\ \ldots,\  \wv_r=e_{(r-1)d+1} \wedge \cdots \wedge e_{rd}.
\]
Let $\{e_1^*,\ldots,e_{n+1}^*\}\subset V^*$ be the dual basis. Then we can check that $I^{\wedge}(\wv_1,  \ldots, \wv_r)$ is generated by $e_{rd+1}^*,\ldots,e_{n+1}^*$, in degree $1$, and degree-two elements $e_i^*\wedge e_j^*$ such that ${1\le i\le sd<j\le rd}$, for some ${s<r}$.  It is clear that these elements are in the ideal. To see that they are enough to generate it, take ${\wh\in\bigwedge^bV^*}$ and write ${\wh=\sum_{1\le i_1 < \cdots < i_b\le n+1}a_{i_1\cdots i_b}e_{i_1}^* \wedge \cdots \wedge e_{i_b}^*}$. Then the coefficients of ${\wh \cdot \wv_j}$ in the basis ${\{e_{i_1}\wedge \cdots \wedge e_{i_{d-b}}\}_{1\le i_1< \cdots < i_{d-b} \le n+1}}$ of $\bigwedge^{d-b}V$ are the elements $a_{i_1\cdots i_b}$, with ${(j-1)d < i_1< \cdots < i_b \le jd}$. 
%\[
%\wh \cdot \wv_j=
%\sum_{\{j_1, \ldots, j_b\} \subset \{1, \ldots , d \}} 
%\mathrm{sign}(\{j_1, \ldots, j_b\}) a_{(j-1)d+j_1\cdots (j-1)d+j_b} 
%e_{(j-1)d+1} \wedge \cdots \wedge \hat{e_{(j-1)d+j_1}}\wedge \cdots \wedge \hat{e_{(j-1)d+j_b}} \wedge \cdots \wedge e_{jd},
%\] 
%\[
%{\wh \cdot \wv_{j,\{j_1, \ldots, j_b\}}=a_{(j-1)d+j_1\cdots (j-1)d+j_b}},
%\] 
So $\wh$ will be in the ideal only if all such ${a_{i_1\cdots i_b}}$ vanish. % whenever ${(s-1)d < i_1< \cdots < i_b \le sd}$, for some ${s \le r}$. 
Hence in every non-zero term in $\wh$ at least one of the generators mentioned above occurs.  
\end{proof}

\begin{example}\label{twointersecting}
Let $d$ be an integer such that ${2d\ge n+1}$, and let $[\wv_1],[ \wv_2] \in {\mathbb{G}({d},{V})}$ be two general points. Then $I^{\wedge}(\wv_1,  \wv_2)$ is generated in degree $2$ and,  if $d+1<n$, in degree ${n+1-d}$. Since $\wv_1$ and $\wv_2 $ are general, they represent $d$-dimensional subspaces ${\vv_1, \vv_2}$ of $V$ that intersect minimally in dimension ${2d-(n+1)}$, which is assumed to be non-negative. Choose a basis ${e_1,\ldots,e_{n+1}}$ for $V$ such that
\begin{align*}
\wv_1&=e_1 \wedge \cdots \wedge e_{2d-n-1} \wedge e_{2d-n} \wedge \cdots \wedge e_d,\\
\wv_2&=e_1 \wedge \cdots \wedge e_{2d-n-1} \wedge e_{d+1} \wedge \cdots \wedge e_{n+1}.
\end{align*}
Let $\{e_1^*,\ldots,e_{n+1}^*\}$ be the dual basis. If ${d\neq n-1}$ then $I^{\wedge}(\wv_1,  \wv_2)_2$ is generated by elements $e_i^*\wedge e_j^*$ such that ${2d-n\le i\le d<j\le n+1}$, while if ${n\geq 3}$ and ${d=n-1}$ then, among the generators of ${I^{\wedge}(\wv_1,  \wv_2)_2}$, apart from the previous ones, there is also ${e_{n-2}^*\wedge e_{n-1}^*-e_{n}^*\wedge e_{n+1}^*}$; moreover, in both cases, ${I^{\wedge}(\wv_1,  \wv_2)_{d-(2d-n)-1}}$ is generated by ${e_{2d-n}^*\wedge \cdots \wedge e_d^*-e_{d+1}^*\wedge \cdots \wedge e_{n+1}^*}$.

Remark that if ${2d=n+1}$ then
\begin{align*}
\wv_1&=e_1\wedge \cdots \wedge e_d,\\
\wv_2&=e_{d+1}\wedge \cdots \wedge e_{2d}
\end{align*}
and we are in the case of Lemma \ref{lemma:few:general:points} where we see that $I^{\wedge}(\wv_1,\wv_2)$ is generated only in degree 2 by $e_i^*\wedge e_j^*$, with $1\leq i \leq d < j\leq n+1$.
\end{example}

\begin{example}\label{threeintersecting}
Let $d$ be an integer such that ${3d\ge 2(n+1)}$, and let $[\wv_1]$, $[\wv_2]$, and $[\wv_3]$ be three points in $\mathbb{G}({d},{V})$ such that if we choose a basis ${e_1,\ldots,e_{n+1}}$ for $V$ they can be represented as
%. Then $I^{\wedge}(\wv_1, \wv_2, \wv_3)$ is generated in degree $3$. Since ${\wv_1, \wv_2, \wv_3 }$ are general, they represent $d$-dimensional subspaces of $V$ that intersect minimally in dimension ${3d-2(n+1)}$, which is assumed to be non-negative. Moreover their pairwise intersections have dimension ${2d-(n+1)}$. Choose a basis ${e_1,\ldots,e_{n+1}}$ for $V$ such that
\begin{align*}
\wv_1&=e_1 \wedge \cdots \wedge e_{3d-2(n+1)} \wedge e_{3d-2(n+1)+1}\wedge\cdots \wedge 
  e_{2d-n-1} \wedge e_{2d-n} \wedge \cdots \wedge e_d,\\
\wv_2&=e_1 \wedge \cdots \wedge e_{3d-2(n+1)} \wedge e_{3d-2(n+1)+1}\wedge\cdots \wedge 
  e_{2d-n-1} \wedge e_{d+1} \wedge \cdots \wedge e_{n+1},\\
\wv_3&=e_1 \wedge \cdots \wedge e_{3d-2(n+1)}  \wedge e_{2d-n} \wedge\cdots \wedge e_d \wedge e_{d+1} \wedge \cdots \wedge e_{n+1}.
\end{align*}
%Their pairwise intersections have dimension ${2d-(n+1)}$.
Let $\{e_1^*,\ldots,e_{n+1}^*\}$ be the dual basis. Then $I^{\wedge}(\wv_1,  \wv_2)$ is generated by elements ${e_i^*\wedge e_j^*\wedge e_k^*}$ such that
${3d-2(n+1)< i\le 2d-n-1< j\le d<k\le n+1}$.   
\end{example}
\red{
%
%\begin{example} Let $6d \geq 3n+1$  and let $\wv_1, \wv_2, \wv_3 \in \bigwedge^dV$ be such that
%\begin{align*}
%\wv_1&=e_1 \wedge \cdots \wedge e_{2d-(n+1)} \wedge e_{2d-(n+1)+1}\wedge\cdots \wedge 
%  e_{4d-2(n+1)},\\
%\wv_2&=e_1 \wedge \cdots \wedge e_{2d-(n+1)}  \wedge e_{4d-2(n+1)+1}\wedge\cdots \wedge 
%  e_{6d-3(n+1)} ,\\
%\wv_3&=e_{2d-(n+1)+1}\wedge\cdots \wedge 
%  e_{4d-2(n+1)}  \wedge e_{4d-2(n+1)+1}\wedge\cdots \wedge 
%  e_{6d-3(n+1)}.
%\end{align*}
% Then $I^{\wedge}(\wv_1, \wv_2, \wv_3)$ is generated in degree $3$. The elements ${\wv_1, \wv_2, \wv_3 }$  represent $d$-dimensional subspaces of $V$ that intersect pairwise in dimension ${2d-(n+1)}$. Choose a basis ${e_1,\ldots,e_{n+1}}$ for $V$ such that
%Let ${e_1^*,\ldots,e_{n+1}^*}$ be the dual basis. Then $I^{\wedge}(\wv_1,  \wv_2,\wv_3)$ is generated by elements ${e_i^*\wedge e_j^*\wedge e_k^*}$ such that $1\leq i\leq 2d-n-1<j< 4d-2(n+1)+1\leq k \leq n+1$.
%
%\end{example}
}

\begin{example} %[NO LABEL]
Let ${n=3}$ and ${d=2}$. Let $\{e_0, e_1, e_2, e_3\}$ be a basis for $V$ and let $\{e_0^*, e_1^*, e_2^*, e_3^*\}$ be the dual basis. Consider the following vectors in $\bigwedge^2V$:
\begin{align*}
\wv_1&=e_0 \wedge e_1,\\
\wv_2&=e_2 \wedge e_3,\\
\wv_3&=(e_0+e_2) \wedge (e_1+e_3) = e_0 \wedge e_1 + e_0 \wedge e_3 - 
  e_1 \wedge e_2 + e_2 \wedge e_3,\\
\wv_4&=(e_0+e_3) \wedge (e_1+e_2) = e_0 \wedge e_1 + e_0 \wedge e_2 - 
  e_1 \wedge e_3 - e_2 \wedge e_3.
\end{align*}
As we saw in Example \ref{twointersecting}, $I^{\wedge}(\wv_1, \wv_2)$ is generated in degree two. It is easy to verify that also $I^{\wedge}(\wv_1, \wv_2, \wv_3)$ and $I^{\wedge}(\wv_1, \wv_2, \wv_3, \wv_4)$ are generated in degree two, and 
\begin{align*}
I^{\wedge}(\wv_1, \wv_2) &= ( e_0^* \wedge e_2^*,\ e_0^* \wedge e_3^*,\ 
  e_1^* \wedge e_2^*,\ e_1^* \wedge e_3^* ),\\
I^{\wedge}(\wv_1, \wv_2, \wv_3) &= ( e_0^* \wedge e_2^*,\ e_0^* \wedge e_3^* + 
  e_1^* \wedge e_2^*,\ e_1^* \wedge e_3^* ),\\
I^{\wedge}(\wv_1, \wv_2, \wv_3, \wv_4) &= ( e_0^* \wedge e_2^* + e_1^* \wedge e_3^*,\ 
  e_0^* \wedge e_3^* + e_1^* \wedge e_2^* ).
\end{align*}
%\begin{align*}
%I^{\wedge}(\wv_1, \wv_2)_2&=\langle e_0^* \wedge e_2^*,\ e_0^* \wedge e_3^*,\ 
%  e_1^* \wedge e_2^*,\ e_1^* \wedge e_3^* \rangle,\\
%I^{\wedge}(\wv_1, \wv_2, \wv_3)_2&=\langle e_0^* \wedge e_2^*,\ e_0^* \wedge e_3^* + 
%  e_1^* \wedge e_2^*,\ e_1^* \wedge e_3^* \rangle,\\
%I^{\wedge}(\wv_1, \wv_2, \wv_3, \wv_4)_2&=\langle e_0^* \wedge e_2^* + e_1^* \wedge e_3^*,\ 
%  e_0^* \wedge e_3^* + e_1^* \wedge e_2^*  \rangle.
%\end{align*}
\end{example}

\begin{example}
Let ${n=5}$ and ${d=4}$. Let $\{e_0, \ldots, e_5\}$ be a basis for $V$ and let $\{e_0^*, \ldots, e_5^*\}$ be the dual basis. Consider the following vectors in $\bigwedge^4V$:
\begin{align*}
\wv_1&=e_0 \wedge e_1 \wedge e_2 \wedge e_3,\\
\wv_2&=e_0 \wedge e_1 \wedge e_4 \wedge e_5,\\
\wv_3&=e_2 \wedge e_3 \wedge e_4 \wedge e_5,\\
\wv_4&=(e_0+e_2) \wedge (e_1+e_3) \wedge (e_0+e_4) \wedge (e_1+e_5).
\end{align*}
Again as in Examples \ref{twointersecting} and \ref{threeintersecting}, the ideals $I^{\wedge}(\wv_1, \wv_2)$ and $I^{\wedge}(\wv_1, \wv_2, \wv_3)$ are generated in degrees $2$ and $3$, respectively. However, $I^{\wedge}(\wv_1, \wv_2, \wv_3, \wv_4)$ has two generators in degree $3$ and five in degree $4$. Here are the generating sets for each ideal:
\begin{align*}
I^{\wedge}(\wv_1, \wv_2) &= ( e_2^* \wedge e_4^*,\ e_2^* \wedge e_5^*,\ 
  e_3^* \wedge e_4^*,\ e_3^* \wedge e_5^* ),\\
I^{\wedge}(\wv_1, \wv_2, \wv_3) &= ( e_0^* \wedge e_2^* \wedge e_4^*,\ 
    e_0^* \wedge e_2^* \wedge e_5^*,\ e_0^* \wedge e_3^* \wedge e_4^*,\ 
    e_0^* \wedge e_3^* \wedge e_5^*,\\ 
  &\qquad e_1^* \wedge e_2^* \wedge e_4^*,\ 
    e_1^* \wedge e_2^* \wedge e_5^*,\ e_1^* \wedge e_3^* \wedge e_4^*,\ 
    e_1^* \wedge e_3^* \wedge e_5^* ),\\
I^{\wedge}(\wv_1, \wv_2, \wv_3, \wv_4) &= \big( e_0^* \wedge e_2^* \wedge e_4^*,\ 
    e_1^* \wedge e_3^* \wedge e_5^*,\ 
    e_0^* \wedge e_1^* \wedge e_2^* \wedge (e_3^*+e_5^*),\\ 
  &\qquad e_0^* \wedge e_1^* \wedge e_2^* \wedge e_5^* + 
    e_0^* \wedge e_1^* \wedge e_3^* \wedge e_4^*,\ 
    e_0^* \wedge e_1^* \wedge e_4^* \wedge (e_3^*+e_5^*),\\ 
  &\qquad e_0^* \wedge e_1^* \wedge e_4^* \wedge e_5^* - 
    e_0^* \wedge e_2^* \wedge e_3^* \wedge e_5^*,\ 
    e_0^* \wedge (e_2^*+e_4^*) \wedge e_3^* \wedge e_5^*,\\ 
  &\qquad e_0^* \wedge e_3^* \wedge e_4^* \wedge e_5^* + 
    e_1^* \wedge e_2^* \wedge e_3^* \wedge e_4^*,\ 
    e_0^* \wedge e_2^* \wedge (e_3^*+e_5^*) \wedge e_4^*,\\ 
  &\qquad (e_1^*+e_3^*) \wedge e_2^* \wedge e_4^* \wedge e_5^* 
    \big).
\end{align*}

\end{example}

\section{On conditions for the rank of a skew-symmetric tensor}\label{3vector}

The following example highlights another very big difference with the symmetric case: in the symmetric case if we have an element of type ${[L_1]^d+[L_2]^d}$, with $[L_1]$ and $[L_2]$ distinct points in ${\mathbb{P}(V)}$, this has symmetric rank $2$ for all ${d>1}$ since Veronese varieties are cut out by quadrics but do not contain lines. However, the next Lemma shows that the same is not going to happen in the skew-symmetric case.

\begin{lemma}\label{line:in:Grass}
Let ${[\wv_1],  [\wv_2 ]\in {\mathbb{G}({d},{V})\subset \mathbb{P}\bigl(\bigwedge^d,V\bigr)}}$ be two distinct points. The tensor ${\wv=\wv_1 +  \wv_2}$ has rank one if and only if the line passing through $\wv_1$ and $\wv_2$ is contained in ${\mathbb{G}({d},{V})}$.
\end{lemma}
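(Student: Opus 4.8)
The plan is to establish the two implications separately; the reverse one is immediate and the forward one carries the content.

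For the easy direction, assume the line through $[\wv_1]$ and $[\wv_2]$ lies inside $\Gr{d}{V}$. Since $[\wv]=[\wv_1+\wv_2]$ is itself a point of this line, it belongs to the Grassmannian, so $\wv$ is decomposable, i.e.\ of rank one.

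For the forward direction I would first put $\wv$ in a normal form. Set $k=\dim(\vv_1\cap\vv_2)$ and $m=d-k$; as $[\wv_1]\neq[\wv_2]$ we have $\vv_1\neq\vv_2$, hence $k<d$ and $m\geq 1$. Pick a basis $u_1,\ldots,u_k$ of $\vv_1\cap\vv_2$ and extend it to bases $u_1,\ldots,u_k,a_1,\ldots,a_m$ of $\vv_1$ and $u_1,\ldots,u_k,b_1,\ldots,b_m$ of $\vv_2$, so that the $k+2m$ vectors $u_i,a_j,b_l$ are linearly independent. Absorbing the two scalar factors into $a_1$ and $b_1$, I may assume $\wv_1=u_1\wedge\cdots\wedge u_k\wedge a_1\wedge\cdots\wedge a_m$ and $\wv_2=u_1\wedge\cdots\wedge u_k\wedge b_1\wedge\cdots\wedge b_m$, whence $\wv=\mu\wedge\omega$ with $\mu=u_1\wedge\cdots\wedge u_k$ and $\omega=a_1\wedge\cdots\wedge a_m+b_1\wedge\cdots\wedge b_m$. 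The target is to prove that $\wv$ has rank one precisely when $m=1$: indeed if $m=1$ then $k=d-1$ and $\lambda\wv_1+\nu\wv_2=u_1\wedge\cdots\wedge u_{d-1}\wedge(\lambda a_1+\nu b_1)$ is decomposable for all $\lambda,\nu$, so the entire line sits in $\Gr{d}{V}$, which is exactly the conclusion sought.

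The tool I would use is the classical criterion that a nonzero $t\in\bigwedge^d V$ is decomposable if and only if its space of linear divisors $L(t)=\{v\in V\mid v\wedge t=0\}$, which always has dimension at most $d$, attains dimension $d$. The computation then splits into two parts. First, for $\omega$ alone: expanding $v\wedge\omega$ for $v=\sum_j\alpha_j a_j+\sum_l\beta_l b_l$ yields $\sum_j\alpha_j\,a_j\wedge b_1\wedge\cdots\wedge b_m+\sum_l\beta_l\,b_l\wedge a_1\wedge\cdots\wedge a_m$, whose $(m+1)$-vectors are distinct basis monomials when $m\geq 2$; hence $L(\omega)=0$ for $m\geq 2$, while for $m=1$ one gets $L(\omega)=\langle a_1+b_1\rangle$, of dimension $1=m$. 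Second, for the product $\wv=\mu\wedge\omega$: writing $V=M\oplus U\oplus E$ with $M=\vv_1\cap\vv_2$, $U=\langle a_j,b_l\rangle$ and $E$ a complement, and decomposing $v=v_M+v_U+v_E$, the vanishing $v_M\wedge\mu=0$ (because $\bigwedge^{k+1}M=0$) together with the independence of the three summands forces $v\wedge\wv=0$ if and only if $v_E=0$ and $v_U\in L(\omega)$; hence $L(\wv)=M\oplus L(\omega)$ and $\dim L(\wv)=k+\dim L(\omega)$. Combining the two parts, $\wv$ is decomposable iff $\dim L(\omega)=m$, iff $\omega$ is decomposable, iff $m=1$, as desired.

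I expect the main obstacle to be the second computation: one must rule out accidental linear divisors mixing the $u_i$ with the $a_j$ and $b_l$, and check that wedging by $\mu$ does not kill the relevant $(m+1)$-vectors. Carrying the direct-sum decomposition $V=M\oplus U\oplus E$ throughout, so that the basis monomials of $\bigwedge^{\bullet}V$ involved stay linearly independent, is what makes these vanishing statements reduce to reading off coefficients.
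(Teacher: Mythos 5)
Your proof is correct, but it takes a genuinely different route from the paper's. The paper's argument is a two-line projective observation: since the Grassmannian $\Gr{d}{V}$ is cut out by quadrics (the Pl\"ucker relations), if $\wv=\wv_1+\wv_2$ has rank one then the line $\ell$ through the two distinct points $[\wv_1],[\wv_2]$ meets $\Gr{d}{V}$ in a third point $[\wv_1+\wv_2]$; every quadric in the ideal then vanishes at three points of $\ell$, hence identically on $\ell$, so $\ell\subset\Gr{d}{V}$ (the converse being trivial, as in your easy direction). You instead normalize via $M=\vv_1\cap\vv_2$, factor $\wv=\mu\wedge\omega$, and apply the classical divisor criterion that a nonzero $t\in\bigwedge^dV$ is decomposable iff $\dim\{v\in V\mid v\wedge t=0\}=d$, computing $L(\wv)=M\oplus L(\omega)$ through the grading of $\bigwedge^{d+1}V$ induced by $V=M\oplus U\oplus E$; your computations are sound, including the monomial-independence check for $m\ge 2$ and the forced vanishing $v_E=0$ (and the minor ambiguity of whether $L(\omega)$ is computed in $U$ or in $V$ is harmless, since the graded decomposition shows the two coincide). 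What your route buys: it is self-contained and quantitative --- it in effect re-proves the paper's subsequent Lemma \ref{dimensionrmk} (rank one iff $\dim(\vv_1\cap\vv_2)\ge d-1$) by a cleaner method than the paper's induction, and for $m=1$ it exhibits the explicit rank-one form $u_1\wedge\cdots\wedge u_{d-1}\wedge(\lambda a_1+\nu b_1)$ of \emph{every} point of the line, rather than inferring containment abstractly. What the paper's route buys: brevity and conceptual transparency, at the cost of invoking the external fact that $\Gr{d}{V}$ is an intersection of quadrics; it needs no normal form and no case analysis on $m$ at all.
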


\begin{proof}
Since the Grassmannian ${\mathbb{G}({d},{V})}$ is cut out by quadrics, if the line $\ell$ through $[\wv_1]$ and $[\wv_2]$ also meets the Grassmannian in ${[\wv_1 +  \wv_2]}$ then $\ell$ must be contained in ${\mathbb{G}({d},{V})}$.
\end{proof}

The following lemma is probably a classically known fact.

\begin{lemma}\label{dimensionrmk}
Let ${[\wv_1],  [\wv_2 ]\in {\mathbb{G}({d},{V})\subset\mathbb{P}\bigl(\bigwedge^dV\bigr)}}$. The tensor ${\wv=\wv_1 +  \wv_2}$ has skew\-\mbox{-symmetric} rank $1$ if and only if the intersection of the subspaces  $\vv_1$ and $\vv_2$ has dimension at least ${d-1}$. 
\end{lemma}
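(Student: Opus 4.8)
The plan is to set $k=\dim(\vv_1\cap\vv_2)$ and to show that $\wv=\wv_1+\wv_2$ is decomposable (i.e.\ has rank one) exactly when $k\ge d-1$. First I would choose a basis adapted to the intersection: pick $e_1,\ldots,e_k$ spanning $\vv_1\cap\vv_2$ and extend to bases $e_1,\ldots,e_k,f_1,\ldots,f_{d-k}$ of $\vv_1$ and $e_1,\ldots,e_k,g_1,\ldots,g_{d-k}$ of $\vv_2$; the $2d-k$ vectors so obtained are linearly independent. Rescaling the chosen representatives (which we may absorb into the $f_j,g_j$), we write
\[
\wv_1=e_1\wedge\cdots\wedge e_k\wedge f_1\wedge\cdots\wedge f_{d-k},\qquad
\wv_2=e_1\wedge\cdots\wedge e_k\wedge g_1\wedge\cdots\wedge g_{d-k}.
\]

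For the ``if'' direction, if $k=d$ then $\vv_1=\vv_2$ and $\wv$ is a scalar multiple of $\wv_1$, so its rank is at most one; if $k=d-1$ then $d-k=1$ and
\[
\wv=e_1\wedge\cdots\wedge e_{d-1}\wedge(f_1+g_1),
\]
which is manifestly decomposable. (Equivalently, since $\vv_1\cap\vv_2$ has dimension $d-1$, the pencil of $d$-planes lying between $\vv_1\cap\vv_2$ and $\vv_1+\vv_2$ is a line through $[\wv_1]$ and $[\wv_2]$ contained in $\Gr{d}{V}$, and Lemma~\ref{line:in:Grass} applies.)

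For the ``only if'' direction I would argue contrapositively and show that $k\le d-2$ forces $\wv$ to have rank larger than one. The idea is to compute the essential variables of $\wv$, i.e.\ the smallest $W\subseteq V$ with $\wv\in\bigwedge^dW$ (Remark~\ref{essentialvars}), as the orthogonal of $\ker\cC_{\wv}^{1,d-1}$. Writing a covector $h\in V^*$ in the dual basis and using $\cC_{\wv}^{1,d-1}(h)=h\cdot\wv_1+h\cdot\wv_2$, the contraction $h\cdot\wv_1$ lies in $\bigwedge^{d-1}\vv_1$ and $h\cdot\wv_2$ in $\bigwedge^{d-1}\vv_2$ (Lemma~\ref{inter}, Lemma~\ref{kerimg}). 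The key point is that when $d-k\ge2$ no basis monomial of $h\cdot\wv_1$ coincides with one of $h\cdot\wv_2$: every monomial of $h\cdot\wv_1$ involves only $e$'s and $f$'s and still carries some $f_j$ (removing one factor from $\wv_1$ leaves at least one $f$, since $d-k-1\ge1$), while every monomial of $h\cdot\wv_2$ involves only $e$'s and $g$'s and carries some $g_j$; hence the two monomial sets are disjoint. Thus there is no cancellation, and $h\cdot\wv=0$ forces every coordinate of $h$ along $\vv_1+\vv_2$ to vanish. Therefore $\ker\cC_{\wv}^{1,d-1}=(\vv_1+\vv_2)^{\perp}$, the essential variables of $\wv$ equal $\vv_1+\vv_2$, and their number is $\dim(\vv_1+\vv_2)=2d-k\ge d+2$. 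But a rank-one tensor $\wv=v_1\wedge\cdots\wedge v_d$ has exactly $d$ essential variables (equivalently, by Lemma~\ref{kerimg}, $\img\cC_{\wv}^{1,d-1}=\bigwedge^{d-1}\vv$ has dimension $d$), so $\wv$ cannot have rank one.

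I expect the main obstacle to be the monomial bookkeeping in the kernel computation: one must verify carefully that the supports of $h\cdot\wv_1$ and $h\cdot\wv_2$ are disjoint exactly when $d-k\ge2$, since this transversality is precisely what fails at $d-k=1$, where the shared monomial $e_1\wedge\cdots\wedge e_{d-1}$ produces the single relation $f_1^*+g_1^*$ in the kernel and drops the number of essential variables back to $d$. Making the signs and the direct-sum decomposition $\bigwedge^{d-1}\vv_1\oplus\bigwedge^{d-1}\vv_2$ precise, so that vanishing of the sum really forces vanishing of each coefficient, is the technical heart; everything else is routine linear bookkeeping.
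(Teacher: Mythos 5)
Your proof is correct, but the converse takes a genuinely different route from the paper's. The paper argues by induction on $d$: it factors a basis $w_1,\ldots,w_k$ of $\vv_1\cap\vv_2$ out of both summands, writes $\wv=w_1\wedge\cdots\wedge w_k\wedge\tilde\wv$ with $\tilde\wv\in\bigwedge^{d-k}V$, and reduces to the base case $d=2$, where $v_1\wedge v_2+v_3\wedge v_4$ with $v_1,\ldots,v_4$ independent corresponds to a skew-symmetric matrix of rank $4$, hence is not a rank-one bivector. You instead compute $\ker\cC_{\wv}^{1,d-1}$ outright; your monomial bookkeeping is sound, and can be packaged even more cleanly: $h\cdot\wv_1\in\bigwedge^{d-1}\vv_1$ and $h\cdot\wv_2\in\bigwedge^{d-1}\vv_2$ by Lemma~\ref{kerimg}, and $\bigwedge^{d-1}\vv_1\cap\bigwedge^{d-1}\vv_2=\bigwedge^{d-1}(\vv_1\cap\vv_2)=0$ exactly when $k\le d-2$, so the sum vanishes only if each term does, whence $\ker\cC_{\wv}^{1,d-1}=\vv_1^{\perp}\cap\vv_2^{\perp}=(\vv_1+\vv_2)^{\perp}$ and $\wv$ has $2d-k\ge d+2$ essential variables, while a nonzero rank-one tensor has exactly $d$ by Lemma~\ref{kerimg} and Remark~\ref{essentialvars}. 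As for what each approach buys: the paper's induction is elementary and self-contained, though its reduction step (that $w_1\wedge\cdots\wedge w_k\wedge\tilde\wv$ has rank one if and only if $\tilde\wv$ does) is itself left somewhat informal; your argument is uniform in $d$, avoids that step entirely, exercises the skew-apolarity machinery the paper has just built, and yields strictly more information -- the exact kernel of the first catalecticant of a rank-two sum -- which is precisely the kind of invariant the trivector algorithms of Section~\ref{sec:trivector} test (e.g.\ the case distinctions on $\dim\ker\cC_{\wv}^{1,2}$). Two trivial quibbles, neither affecting correctness: with the sign convention of \eqref{dot} the deleted factor occupies the same (last) position in both monomials at $d-k=1$, so the kernel element in your parenthetical aside should be $f_1^*-g_1^*$ rather than $f_1^*+g_1^*$; and when $k=d$ with $\wv_2=-\wv_1$ the sum is zero, a degenerate case of the statement that the paper's proof glosses over in the same way yours does.
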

\begin{proof}
First of all remark that if ${\dim(\vv_1\cap \vv_2)=d}$ then $\wv_1$ and $\wv_2$ represent the same space, so there exist linearly independent vectors ${v_1, \ldots , v_d\in V}$ such that ${\wv_i=\alpha_{i,1}v_1\wedge \cdots \wedge \alpha_{i,d}v_d}$, $i=1,2$, then clearly
\[
{\wv=\wv_1+\wv_2=(\alpha_{1,1}+\alpha_{2,1})v_1\wedge \cdots \wedge (\alpha_{1,d}+\alpha_{2,d})v_d}.
\] 
In the case ${\dim (\vv_1\cap\vv_2)=d-1}$ there exists a subspace ${\vec{\mathbf{w}}\subset V}$ of dimension ${d-1}$ such that ${\wv_i=\mathbf{w}\wedge v_{i}}$ for ${i=1,2}$. Then ${\wv_1+\wv_2=\mathbf{w}\wedge(v_1+v_2)}$, so if $\{w_1, \ldots , w_{d-1}\}$ is a basis for $\vec{\mathbf{w}}$ we have that ${\wv=\wv_1+\wv_2=w_1\wedge \cdots \wedge w_{d-1}\wedge(v_1+v_2)}$, which has rank $1$. 

Conversely, assume that  ${\dim  (\vv_1\cap\vv_2)=k\leq d-2}$. We can prove that ${\wv=\wv_1+\wv_2}$ doesn't have rank $1$ by induction on $d$. If ${d=2}$ then ${\wv=v_1\wedge v_2 +v_3\wedge v_4}$ with ${v_1 , \ldots , v_4}$ linearly independent in $V$. Such a $\wv$ has skew\--symmetric rank 1 as skew\-\mbox{-symmetric} tensor if and only if the skew\--symmetric matrix which it represents has rank $2$, which is impossible since  ${v_1 , \ldots , v_4}$ are linearly independent in $V$ hence $\wv$ has rank $4$ as a matrix. Now if ${d>2}$ we have that there exist ${w_1, \ldots , w_k\in V}$ linearly independent vectors such that $\wv_i=w_1\wedge \cdots \wedge w_k \wedge v_{i,k+1}\wedge \cdots \wedge v_{i,d}$ with  $v_{i,j}$ linearly independent for $i=1,2$ and $j=k+1, \ldots , d$; so $\wv=w_1\wedge \cdots \wedge w_k \wedge(v_{1,k+1}\wedge \cdots \wedge v_{1,d} +  v_{2, k+1} \wedge \cdots \wedge v_{2,d})$ which has rank one if and only if $\tilde \wv=v_{1,k+1}\wedge \cdots \wedge v_{1,d} +  v_{2, k+1} \wedge \cdots \wedge v_{2,d}\in \bigwedge^{d-k}V$ has rank 1. If $k=d-1$ then $\tilde\wv $ is a matrix that by the same reason as above doesn't have (as a matrix) rank 2 hence $\wv$ as a tensor doesn't have skew symmetric rank 1. If $k<d-2$ one can again argue by induction and easily conclude.
\end{proof}

\begin{remark}\label{UwedgeWremark} %[NO USED LABLE]
Let $U$ and $W$ be subspaces of $V$ such that ${U\cap W=\{0\}}$. Then both ${\PP(U\wedge W)}$ and $\mathbb{G}(1,V)$ are subvarieties of $\PP\bigl(\bigwedge^2V\bigr)$, and we wish to see what their intersection looks like. Since ${U\wedge W\subset(U+W) \wedge (U+W)}$, any rank-one tensor in ${U\wedge W}$ can be written as ${(u_1+w_1) \wedge (u_2+w_2)}$, with ${u_1,u_2\in U}$ and ${w_1,w_2\in W}$. However, when we expand this, we get
\[
(u_1+w_1) \wedge (u_2+w_2) = u_1 \wedge u_2 + u_1 \wedge w_2 + w_1 \wedge u_2 + w_1 \wedge w_2, 
\] 
so both $u_1 \wedge u_2$ and $w_1 \wedge w_2$ must vanish. But then the tensor ${u_1 \wedge w_2+w_1 \wedge u_2}$ has rank $1$, so by Remark \ref{dimensionrmk} the elements ${u_1 \wedge w_2}$ and ${w_1 \wedge u_2}$ correspond to lines in $\PP(V)$ with one point in common. Therefore there is ${v\in V}$ and scalars ${a_1,a_2,b_1,b_2\in K}$ such that ${a_1u_1+b_2w_2=v=a_2u_2+b_1w_1}$.  However, this means that ${a_1u_1-a_2u_2=b_1w_1-b_2w_2}$, and since ${U\cap W=\{0\}}$ we conclude that at least one of the sets $\{u_1,u_2\}$ or $\{w_1,w_2\}$ is linearly dependent, so  ${u_1 \wedge w_2 + w_1 \wedge u_2=\tilde u\wedge \tilde w}$ for some ${\tilde u\in U}$ and ${\tilde w\in  W}$. Therefore the intersection ${\PP(U\wedge W)\cap\mathbb{G}(1,E)}$ is isomorphic to the Segre variety $\PP( U\otimes  W)$. 

Observe that if we drop the condition ${ U\cap W=\{0\}}$, this is no longer the case: let ${U=\langle e_0,e_1\rangle}$ and ${W=\langle e_0,e_2\rangle}$, where ${e_0,e_1,e_2}$ are independent. Then
\[{e_0\wedge e_2 + e_1 \wedge e_0= e_0 \wedge (e_1-e_2)},
\]
but we cannot write this tensor as ${\tilde u\wedge \tilde w}$, with ${\tilde u\in U}$ and ${\tilde w\in W}$.
\end{remark}

\section{The trivector case}
\label{sec:trivector}
In this section we study in detail the situation of vectors in $\bigwedge^3\mathbb{C}^n$ for $n\leq 8 $, we call these elements ``trivectors'' in agreement with the notation used by Gurevich in his book \cite{gur1}  since we will extensively use his characterization of normal forms.

Let us start with an example which was already well known to C. Segre (\cite[Paragraph 28]{Segre}).

\begin{example}[Segre]\label{triky}
Let ${n=5}$, let $\{f_0,\ldots,f_5\}$ be a basis of $V$, and let $\{f_0^*,\ldots,f_5^*\}$ be the dual basis. Consider the vectors  
\begin{align*}
\wv_1&=f_0 \wedge f_1 \wedge f_2, & \wv_2&=f_0 \wedge f_3 \wedge f_4,
  &\text{and } \wv_3&=f_1 \wedge f_3 \wedge f_5,
\end{align*}
And let ${\wv=\wv_1+\wv_2+\wv_3}$. Then ${\ker\mathcal{C}_\wv^{1,2}=I^{\wedge}(\wv_1, \wv_2, \wv_3)_1=0}$, and
\begin{equation}
\label{I2divisible}
\begin{aligned}
\ker\mathcal{C}_\wv^{2,1}&=\langle f_0^* \wedge f_2^* + f_3^* \wedge f_5^*,\ 
    f_0^* \wedge f_4^* - f_1^* \wedge f_5^*,\ f_0^* \wedge f_5^*,\ f_1^* \wedge f_2^* - f_3^* \wedge f_4^*,\\  
  &\qquad f_1^* \wedge f_4^*,\ 
    f_2^* \wedge f_3^*,\ f_2^* \wedge f_4^*,\ f_2^* \wedge f_5^*,\ f_4^* \wedge f_5^* \rangle.
\end{aligned}
\end{equation}
We claim that $\wv$ has rank $3$. Suppose that this is not the case, and write ${\wv=\wv_4+\wv_5}$, where ${\wv_4=g_0 \wedge g_1 \wedge g_2}$ and ${\wv_5=g_3 \wedge g_4 \wedge g_5}$. Since ${\ker\mathcal{C}_\wv^{1,2}=0}$, we must have ${V=\langle g_0,\ldots,g_5\rangle}$. So ${g_0,\ldots,g_5}$ are independent, and therefore
\[
I^{\wedge}(\wv_4, \wv_5)_2=({\wv_4}^{\perp})_1 \wedge ({\wv_5}^{\perp})_1
  =\langle g_0^*, g_1^*, g_2^* \rangle \wedge \langle g_3^*, g_4^*, g_5^* \rangle .
\]
Since ${I^{\wedge}(\wv_4, \wv_5)_2\subseteq\ker\mathcal{C}_\wv^{2,1}}$ and both spaces have dimension $9$, equality must hold.

Then, we have 
\begin{equation}\label{blue}
I^{\wedge}(\wv_4, \wv_5)_{3}= \left\langle
\begin{array}{l}
f^{*}_{0}\wedge f^{*}_1\wedge f^{*}_{4},
f^{*}_{0}\wedge f^{*}_1\wedge f^{*}_{5},
f^{*}_{0}\wedge f^{*}_2\wedge f^{*}_{3},
f^{*}_{0}\wedge f^{*}_2\wedge f^{*}_{4},\\
f^{*}_{0}\wedge f^{*}_2\wedge f^{*}_{5},
f^{*}_{0}\wedge f^{*}_3\wedge f^{*}_{5},
f^{*}_{0}\wedge f^{*}_4\wedge f^{*}_{5},
f^{*}_{2}\wedge f^{*}_{3}\wedge f^{*}_{4},\\
f^{*}_{2}\wedge f^{*}_{3}\wedge f^{*}_{5},
f^{*}_{2}\wedge f^{*}_{4}\wedge f^{*}_{6},
f^{*}_{3}\wedge f^{*}_{4}\wedge f^{*}_{5},\\
f^{*}_{0}\wedge f^{*}_{1}\wedge f^{*}_{2} - f^{*}_{1}\wedge f^{*}_{3}\wedge f^{*}_{5},
  f^{*}_{0}\wedge f^{*}_{3}\wedge f^{*}_{4} - f^{*}_{1}\wedge f^{*}_{3}\wedge f^{*}_{5}
\end{array}
\right\rangle
\end{equation}
and 
\[ 
I^{\wedge}(\wv_4, \wv_5)_{3}^{\perp}= \langle f_{0}\wedge f_1\wedge f_2 + f_{0}\wedge f_3\wedge f_4 + f_{1}\wedge f_3\wedge f_5,f_{0}\wedge f_1\wedge f_3\rangle.
\]
But, if  $\wv=\wv_{4}+\wv_{5}$ and $\vv_{4}\cap \vv_{5}=\{0\}$ then $I^{\wedge}(\wv_{4},\wv_{5})_{3}^{\perp}= \langle \wv_{4}, \wv_{5} \rangle$. This implies that ${\PP(I^{\wedge}(\wv_{4},\wv_{5})_{3}^{\perp} )\cap \Gr{3}{V}=\{\wv_{4},\wv_{5}\}}$.

An explicit computation shows that 
\[
\lambda (f_{0}\wedge f_1\wedge f_2 + f_{0}\wedge f_3\wedge f_4 
  + f_{1}\wedge f_3\wedge f_5)+ \mu f_{0}\wedge f_1\wedge f_3 \in \Gr{3}{V}
\] with ${\lambda,\mu\in K}$ implies that ${\lambda=0}$ (we use the Pl\"ucker relation $[0,1,5][1,2,3]-[0,1,2][1,3,5]+[0,1,3][1,2,5]=0= -\lambda^{2}$). This contradicts the property that $\PP(I^{\wedge}(\wv_{4},\wv_{5})_{3}^{\perp}) \cap \Gr{3}{V}=\{\wv_{4},\wv_{5}\}$. Therefore,  $\wv$ must be of rank $3$.
\end{example}
  
Note that if $\wv_4$ and $\wv_5$ are as above, ${\dim\ker\mathcal{C}_\wv^{s,3-s}=\dim\ker\mathcal{C}_{\wv_4+\wv_5}^{s,3-s}}$, for any degree $s$. Both kernels in degree two intersect ${\Gr{3}{V}}$ in (projective) dimension $4$. So we cannot tell the rank of a tensor from computing these dimensions. In this case, it is the structure of ${\ker\mathcal{C}_\wv^{3,0}}$ that allowed us to show that $\wv$ has rank $3$. 

Note also that $\wv$ is the normal form of a vector not belonging to the orbit closure of a rank 2 skew-symmetric tensor. Therefore, since $	\sigma_3(\mathbb{G}(3,V))$ fills the ambient space, this shows that $\wv$ has rank $3$  (cf. \cite[\S 35.2, case IV]{gur1}).

\bigskip
 
If we use the classification given by Gurevich in his book \cite[Chapter VII]{gur1} of the normal forms of the trivectors, i.e.\ skew-symmetric tensors in $\bigwedge^3 \mathbb{C}^{n+1}$, we deduce the following description. 

\subsection{Trivectors in  $\mathbb{P}^2$ or $\mathbb{P}^3$} 
If ${n=2,3}$ there is only one possibility for a projective class of a trivector that is to be of skew-symmetric rank 1, i.e.\ 
\begin{equation}\tag{II}\label{II}[\wv]=[v_0 \wedge v_1 \wedge v_2].\end{equation}
If ${n=3}$ then $I^{\wedge}(\wv)$ is generated in degree $1$ by $I^{\wedge}(\wv)_1$ as in Lemma \ref{lemma:few:general:points}, in particular ${I^{\wedge}(\wv)= (v_3^*)}$ where ${\langle v_3\rangle=\langle v_0,v_1,v_2 \rangle^\perp \subset V}$.  

Therefore, assume that ${\wv\in \bigwedge^3 \mathbb{C}^4}$ is given, if one wants to find its decomposition as in (\ref{II}), one has simply to compute a basis  $\{v_0,v_1,v_2\}$  of $I^{\wedge}(\wv)_1^{\perp}$, and such a basis will be good for the presentation of $\wv$ as a tensor of skew-symmetric rank 1 as in (\ref{II}).

\subsection{Trivectors in  $\mathbb{P}^4$} 
If ${n=4}$ there is one more possibility for a trivector with respect to the previous case (\ref{II}),  that is 
\begin{equation}\tag{III}\label{III}
[v_0 \wedge v_1 \wedge v_2+ v_0\wedge v_3 \wedge v_4].
\end{equation} 
In fact if ${n=4}$ any trivector ${\wv\in \bigwedge^3\mathbb{C}^5}$ is divisible by some vector, say ${\wv=v_0\wedge \wv}'$  where ${\wv'\in \bigwedge^2\mathbb{C}^5}$. Therefore if ${n=4}$ there are only two possibilities for the projective class of a tensor ${\wv\in \bigwedge^3\mathbb{C}^5}$: either it is of skew-symmetric rank 1 and it can be written as (\ref{II}) or it is of skew-symmetric rank 2 and it can be written as (\ref{III}).

This is a particular case of Example \ref{twointersecting} and if we call ${\wv_1=v_0\wedge v_1 \wedge v_2}$ and ${\wv_2=v_0\wedge v_3 \wedge v_4}$ we easily see that ${I^{\wedge}(\wv_1,\wv_2)}$ is generated in degree $2$ by 
\begin{equation}\label{pallino}
(v_1^*\wedge v_3^*,\, v_1^*\wedge v_4^*,\, v_2^*\wedge v_3^*,\, v_2^*\wedge v_4^*,\, 
  v_2^*\wedge v_4^*-v_1^*\wedge v_2^*).
\end{equation}

Notice that Lemma \ref{line:in:Grass} is confirmed: if we take ${\{v_0 , \ldots, v_4\}}$ to be a basis of $\mathbb{C}^5$ and we do the standard Pl\"uker embedding ${v_0\wedge v_1 \wedge v_2 \mapsto p_{0,1,2}}$, ${v_0\wedge v_3 \wedge v_4 \mapsto p_{0,3,4}}$ by a simple computation we get that the line through $p_{0,1,2}$ and $p_{0,3,4}$ is not contained in $\mathbb{G}({3},{\mathbb{C}^5})\subset \mathbb{P}\bigl(\bigwedge^3\mathbb{C}^5\bigr)$.

In any case, if ${n=4}$, in order to understand if a given tensor ${\wv\in\bigwedge^3\mathbb{C}^5}$ has rank $1$ or $2$, it is sufficient to compute   ${\dim \ker \mathcal{C}^{1,2}_\wv}$. If it is non trivial it means that there are generators of degree 1 and that we are in case (\ref{II}) of skew-symmetric rank 1; if ${\dim \ker \mathcal{C}^{1,2}_\wv=0}$ then we only have generators in degree $2$ and we are in case (\ref{III}) of skew-symmetric rank $2$.

Now assume that we want to see the skew-symmetric decomposition of a given tensor $\wv\in\bigwedge^3\mathbb{C}^5$. If we find generators of $I^{\wedge}(\wv)$ in degree 1, say $\{v_4^*, v_5^*\}$, then $\wv$ is of the form ${\wv=v_0\wedge v_1 \wedge v_2}$ where ${\langle v_0,v_1,v_2 \rangle = (v_4^*\wedge v_5^*)^\perp}$. If we do not find any generator in degree 1 and we want to recover the decomposition of $\wv=\wv_1+\wv_2$ as a skew-symmetric rank 2 tensor, we have to look at the structure of $I^{\wedge}(\wv_1,\wv_2)$ in degree 2. Notice that $I^{\wedge}(\wv_1,\wv_2)_2$ is exactly of the same structure of (\ref{pallino}). With this notation we have that $\wv_1=v_0\wedge v_1\wedge v_2$ and $\wv_2=v_0\wedge v_3 \wedge v_4$ where $v_i=(v_i^*)^*$ for $i=1, \ldots , 4$ and $\langle v_0\rangle =\langle v_1, v_2 ,v_3, v_4 \rangle^{\perp}$.

\subsection{Trivectors in  $\mathbb{P}^5$} 
If $n=5$ there are two more possibilities in addition to (\ref{II}) and (\ref{III}) for the normal form of the projective class of trivectors $\wv\in \bigwedge^3\mathbb{C}^6$:
\begin{equation}\tag{IV}\label{IV} [v_0\wedge v_1 \wedge v_2+v_0\wedge v_3 \wedge v_4+v_1\wedge v_3 \wedge v_5]
\end{equation}
and 
\begin{equation}\tag{V}\label{V} [v_0\wedge v_1 \wedge v_2+v_3\wedge v_4 \wedge v_5].
\end{equation}
Obviously (\ref{V}) corresponds to a tensor of skew-symmtetric rank $2$. Since (\ref{IV}) is in a different orbit with respect to all the others, it can be neither of skew-symmetric rank $1$ nor $2$. Therefore the presentation we have as sum of $3$ summands is minimal hence (\ref{IV}) has skew-symmetric rank $3$.

Notice that (\ref{V}) is the case of Lemma \ref{lemma:few:general:points} with ${d=3}$, ${r=2}$ and ${6=n+1=rd}$ where $I^{\wedge}(v_0\wedge v_1\wedge v_2, v_3\wedge v_4\wedge v_5)$ is generated in degree $2$ by ${v_i^*\wedge v_j^*}$ with ${0\leq i \leq 2 < j \leq 5}$. 

Case (\ref{IV}) is well described by Example \ref{triky}. Again, in this last case, we do not have generators in degree $1$ for ${I^{\wedge}(v_0\wedge v_1 \wedge v_2,v_0\wedge v_3 \wedge v_4,v_1\wedge v_3 \wedge v_5)}$, while the generators in degree $2$ are described by (\ref{blue}). 	This leads to the following algorithm for the skew-symmetric tensor decomposition of a tensor ${\wv\in \bigwedge^3 \mathbb{C}^6}$:

\begin{breakablealgorithm}\renewcommand{\thealgorithm}{1}\caption{: Algorithm for the skew-symmetric rank and a decomposition of  an element  in $ \bigwedge^3 \mathbb{C}^6$. }
\leftline{INPUT: $\wv\in \bigwedge^3 \mathbb{C}^6$.}
\leftline{OUTPUT: Decomposition and skew-symmetric rank of $\wv$}
   \begin{algorithmic}[1]
\STATE Compute $\ker \mathcal{C}_{\wv}^{1,2}$;
\IF {$\dim \ker \mathcal{C}_{\wv}^{1,2}=3$}
\STATE{$\wv$ has skew-symmetric rank 1 \AND   $$\wv=v_0\wedge v_1 \wedge v_2$$ where $\langle v_0,v_1,v_2\rangle =(\ker \mathcal{C}_{\wv}^{1,2})^{\perp}$.} 
\ELSE\STATE{Go to Step \ref{alg:3:6:item:3}}
\ENDIF
\STATE\label{alg:3:6:item:3} Compute $\ker \mathcal{C}_{\wv}^{2,1}$; 
\IF {$\dim \ker \mathcal{C}_{\wv}^{1,2}=1$}
\STATE{$\wv$ has skew-symmetric rank 2 \AND   $$\wv=v_0\wedge v_1 \wedge v_2+ v_0 \wedge v_3 \wedge v_4$$ where $\langle v_0,v_1,v_2,v_3,v_4\rangle =(\ker \mathcal{C}_{\wv}^{1,2})^{\perp}=\langle v_5^*\rangle$ \AND 
$\ker \mathcal{C}_{\wv}^{2,1}= \langle  v_1^*\wedge v_3^*, v_1^*\wedge v_4^*, v_2^*\wedge v_3^*, v_2^*\wedge v_4^*, v_2^*v_4^*-v_1^*v_2^*\rangle$}
\ELSE\STATE{$\dim \ker \mathcal{C}_{\wv}^{1,2}=0$ \AND go sto Step \ref{step:rank3};}
\ENDIF
\STATE\label{step:rank3}{$\dim \ker \mathcal{C}_{\wv}^{2,1}=9$ \AND}
\IF {there exist $v_0, \ldots , v_5\in \mathbb{C}^6$ linearly independent vectors such that $\ker \mathcal{C}_{\wv}^{2,1}=\langle v_i^*\wedge v_j^*\rangle_{i\in \{0,1,2\}, j\in \{3,4,5\}}$,} 
\STATE{$\wv$ has skew-symmetric rank 2 \AND
$$\wv=v_0\wedge v_1 \wedge v_2+v_3\wedge v_4 \wedge v_5.$$}
\ELSE\STATE{$\ker \mathcal{C}_{\wv}^{2,1}$ is as in \eqref{I2divisible},
\STATE $\wv$ has skew-symmetric rank 3 \AND
$\wv$ is as in (\ref{IV}):
$$\wv=v_0 \wedge v_1 \wedge v_2+v_0 \wedge v_3 \wedge v_4+v_1\wedge v_3 \wedge v_5$$ where $v_i=f_i$, $i=0, \ldots ,5$.}
\ENDIF
\end{algorithmic}
\end{breakablealgorithm}

{As already pointed out, the skew-symmetric rank classification of tensors in $\bigwedge^3\mathbb{C}^6$ is not new: it was already well known to C. Segre in 1917 (\cite{Segre}). The same was also done by  G.-C. Rota and J. Stein  in 1986 (\cite{rs}) with invariant theory perspective. We refer also to W. Chan who in 1998 wrote this classification in honor of Rota (\cite{Chan}). What we believe it's new in our approach is how to compute the skew-symmetric rank and a skew-symmetric minimal decomposition of any given element in $\bigwedge^3\mathbb{C}^6$.} 

\subsection{Trivectors in  $\mathbb{P}^6$}
If $n=6$ the classification of normal forms of trivectors is due to Schouten \cite{Sch}. In this case, in addition to the classes (\ref{II}) to (\ref{V}), there are five other classes of normal forms for the projective class of  a tensor $\wv\in \bigwedge^3\mathbb{C}^7$:
\begin{equation}\tag{VI}\label{VI} [ a\wedge q\wedge p +b\wedge r\wedge p+ c\wedge s\wedge p],
\end{equation}
\begin{equation}\tag{VII}\label{VII} [q\wedge r\wedge s+ a\wedge q\wedge p +b\wedge r\wedge p+ c\wedge s\wedge p],
\end{equation}
\begin{equation}\tag{VIII}\label{VIII} [a\wedge b\wedge c + q\wedge r\wedge s+ a\wedge q\wedge p ],
\end{equation}
\begin{equation}\tag{IX}\label{IX} [a\wedge b\wedge c + q\wedge r\wedge s+ a\wedge q\wedge p +b\wedge r\wedge p],
\end{equation}
\begin{equation}\tag{X}\label{X} [a\wedge b\wedge c + q\wedge r\wedge s+ a\wedge q\wedge p +b\wedge r\wedge p+ c\wedge s\wedge p].
\end{equation}
The containment diagram of the closures of the orbits of those normal forms is described in \cite{aop}. That diagram shows that (\ref{IX}) is a general element in $\sigma_3\bigl(\mathbb{G}(3,\mathbb{C}^7\bigr))$ hence (\ref{IX}) has skew-symmetric rank equal to 3. The fact that $\sigma_3\bigl(\mathbb{G}(3, \mathbb{C}^7)\bigr)$ is a defective hypersurface \cite{cgg, bdg,aop,b}  (meaning that by a simple count of parameters it is expected to fill the ambient space but it turns out to be a degree 7 hypersurface, see \cite{L,aop}) implies that there is an infinite number of ways to write a general element of  $\sigma_3\bigl(\mathbb{G}(3,\mathbb{C}^7)\bigr)$ as sum of 3 skew-symmetric rank 1 terms (\cite{cc1,cc2, bbc, bv}). It is in fact very easy to show that for any independent choice of $a,b,c,p,q,r,s \in \mathbb{C}^7$,  there always exist $\lambda_0  , \ldots , \lambda_5 \in \mathbb{C}$ and a basis $\{e_0, \ldots , e_6\}$ of $\mathbb{C}^7$ such that  (\ref{IX}) can be written for example as 
\begin{equation}\label{presentation:rank3}[e_0\wedge e_1 \wedge e_2 +e_3\wedge e_4 \wedge e_5  +e_6\wedge(e_0+\cdots +e_5)\wedge(\lambda_0e_0+ \cdots +\lambda_5e_5)]\end{equation}
which is actually a presentation of (\ref{IX}) as a skew-symmetric rank 3 tensor. 
%\begin{AB} In order to find explicitly a sextuple of vectors involved in the decomposition, one observes that wedging \eqref{presentation:rank3} by any of the $e_i$'s, $i=0, \ldots, 6$, one gets a skew-symmetric rank 2 element of $\bigwedge^4\mathbb{C}^7$  of the form of the Example \ref{twointersecting} therefore the degree 2 part of  the skew-symmetric apolar ``ideal" is going to give all the $e_j$'s, $j=0, \ldots, 6$ with $j\neq i$. Moreover, if the reader is familiar with numerical computations, in \cite{bdhm} a numerical method to get a decomposition is given: \cite[Section 5.8]{bdhm} is especially devoted to this case of skew-symmetric rank 3 tensors of $\bigwedge^3\mathbb{C}^7$.
%\end{AB}

\medskip

The closure of the orbit of \eqref{X} fills the ambient space so \eqref{X} corresponds to a general element of $\sigma_4\bigl(\mathbb{G}(3, \mathbb{C}^7)\bigr)$   hence such a tensor has skew-symmetric rank 4.  As above, if we want to see a presentation of (\ref{X}) as a skew-symmetric rank 4 tensor, it is sufficient to take an element of the form (\ref{presentation:rank3}) and add to it any random rank 1 element of $\bigwedge^3\mathbb{C}^7$. In this case, in order to get a decomposition of an element $\wv$ in the orbit of (\ref{X}), one can proceed as follows: a generic line through $\wv$ meets $\sigma_3\bigl(\mathbb{G}(3, \mathbb{C}^7)\bigr)$ in 7 points since $\sigma_3\bigl(\mathbb{G}(3, \mathbb{C}^7)\bigr)$ is a hypersurface of degree 7; pick a generic line among those joining $\mathbb{G}(3, \mathbb{C}^7)$ with $\wv$; if this line intersects  $\sigma_3\bigl(\mathbb{G}(3, \mathbb{C}^7)\bigr)$ in another point $\ww$ then the point on the Grassmannian together with $\ww$ will give a decomposition of $\wv$. The problem may be that, since the line joining the Grassmannian and $\wv$ is not generic, it may happen that the specific line chosen does not intersect  $\sigma_3\bigl(\mathbb{G}(3, \mathbb{C}^7)\bigr)$ in a different point, but then one has simply to try another line until one gets a distinct point on  $\sigma_3\bigl(\mathbb{G}(3, \mathbb{C}^7)\bigr)$.  Again for the reader familiar with numerical computations, the method used in \cite{bdhm} is working particularly well and fast for general tensors (see \cite[Section 4]{bdhm}).

\medskip 

The containment diagram of \cite{aop} shows that the orbit of (\ref{VIII}) strictly contains $\sigma_2\bigl(\mathbb{G}(3, \mathbb{C}^7)\bigr)$, therefore (\ref{VIII}) is not of skew-symmetric rank 2 and hence, since we have a presentation of skew-symmetric rank 3, it is actually of skew-symmetric rank 3. 

\medskip

For the orbit of \eqref{VI} we have a presentation with 3 summands, hence the rank is at most 3, but if the rank of \eqref{VI} was 2, the orbit of \eqref{VI} would be contained in the closure of the orbit of \eqref{V}, which is the open part of $\sigma_2\bigl(\mathbb{G}(\mathbb{P}^2, \mathbb{P}^6)\bigr)$, but, as \cite{aop} shows, this is not the case. Hence (\ref{VI}) has skew-symmetric rank 3. 

\medskip

We are left with (\ref{VII}). Its orbit is not contained in  $\sigma_2\bigl(\mathbb{G}(3, \mathbb{C}^7)\bigr)$ hence its rank is bigger than 2. Since we have a presentation of (\ref{VII}) with 4 summands, the containment diagram of the orbit closures is not giving any further information.
We need a more refined tool.

\begin{lemma}\label{increasing:degree:P6} 
Let $\wv=\wv_1+\wv_2+\wv_3\in \bigwedge^3 \mathbb{C}^7$ with $\wv_i\in \mathbb{G}( 3, \mathbb{C}^7)$ and let $[\vec \wv_i]$ be the planes in $\mathbb{P}^6$ corresponding to  $\wv_i$, $i=1,2,3$. Assume that $\langle [\vec \wv_1], [\vec \wv_2], [\vec \wv_3]\rangle=\mathbb{P}^6$. If there is an element $l\in \mathbb{C}^7$ such that $l\wedge \wv\in \bigwedge^4\mathbb{C}^7$ has skew-symmetric rank $r<2$, then there exists a basis $\{e_0, \ldots , e_6\}$ of $\mathbb{C}^7$ such that one of the following occurs:
\begin{enumerate}[(i)]
\item\label{T1} $\wv=e_0\wedge e_1\wedge e_2+e_3\wedge e_4\wedge e_5+e_0\wedge e_6\wedge (e_1+ \cdots +e_5)$, $l=e_0$ and $r=1$;
\item\label{T2} $\wv=e_0\wedge e_1\wedge e_2+e_0\wedge e_3\wedge e_4+e_0\wedge e_5\wedge e_6$, $l=e_0$ and $l\wedge \wv=0$;
\item\label{T3} $\wv=e_0\wedge e_1\wedge e_2+e_0\wedge e_3\wedge e_4+e_3\wedge e_5\wedge e_6$, either $l=e_0$ or $l=e_3$ and $r=1$.
\end{enumerate}   
\end{lemma}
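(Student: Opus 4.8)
The plan is to trade the degree‑raising condition on $l\wedge\wv$ for a rank condition one dimension lower, and then to run a short case analysis governed by how many of the three planes contain $l$.

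First I would fix $l\neq0$ and pass to the quotient $\pi\colon\mathbb{C}^7\to\bar V:=\mathbb{C}^7/\langle l\rangle\cong\mathbb{C}^6$. Wedging with $l$ has kernel $l\wedge\bigwedge^2\mathbb{C}^7$ and induces an isomorphism $\bigwedge^3\bar V\xrightarrow{\sim}l\wedge\bigwedge^3\mathbb{C}^7$ carrying the class $\bar\wv:=\pi_*\wv$ to $l\wedge\wv$; moreover a nonzero decomposable $4$-vector lying in $l\wedge\bigwedge^3\mathbb{C}^7$ must contain $l$ in its support, hence equals $l\wedge(\text{decomposable})$. Thus for $r\le1$ the rank of $l\wedge\wv$ in $\bigwedge^4\mathbb{C}^7$ equals the rank of $\bar\wv$ in $\bigwedge^3\bar V$, and the hypothesis $r<2$ is equivalent to $\bar\wv$ being decomposable or zero. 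Lifting a representative back, this says $\wv=D+l\wedge\eta$ with $D$ decomposable (a point of $\Gr{3}{\mathbb{C}^7}$) or $D=0$, and $\eta\in\bigwedge^2\mathbb{C}^7$; choosing a complement $W\cong\mathbb{C}^6$ of $\langle l\rangle$ I may take $\eta\in\bigwedge^2W$ and $D\in\bigwedge^3W$, with $\bar\wv=D$.

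Next I would organize the argument by the number $k$ of indices $i$ with $l\in\vec\wv_i$ (equivalently $\pi_*\wv_i=0$), noting $\bar\wv=\sum_{l\notin\vec\wv_i}\pi_*\wv_i$. If $k=3$ then $\bar\wv=0$, all three planes contain $\langle l\rangle$, and the spanning hypothesis forces the three $2$-planes $\vec\wv_i/\langle l\rangle$ to be in direct sum inside $W$; writing $l=e_0$ and splitting $W$ accordingly gives case (ii), with $l\wedge\wv=0$. If $k\le1$ I would reduce to $k=2$: when $k=1$ the two planes avoiding $l$ satisfy, by Lemma~\ref{dimensionrmk} applied in $\bar V$, $\dim\bigl(\pi(\vec\wv_i)\cap\pi(\vec\wv_j)\bigr)\ge2$, which lifts to $\dim(\vec\wv_i\cap\vec\wv_j)\ge1$; replacing $l$ by a generator $m$ of this shared line puts $m$ in two planes without losing the hypothesis, since $m\wedge\wv=m\wedge\wv_{\mathrm{third}}$ has rank $\le1$. (If instead $\dim(\vec\wv_i\cap\vec\wv_j)\ge2$ then $\wv_i+\wv_j$, hence $\wv$, has rank $\le2$, and this degenerate possibility is discarded because $\wv=\wv_1+\wv_2+\wv_3$ is a minimal, rank‑$3$, decomposition.) Now for $k=2$, say $l\in\vec\wv_1\cap\vec\wv_2$ and $C:=\vec\wv_3\not\ni l$, I would choose the complement $W\supseteq C$. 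Then $\wv_3=D\in\bigwedge^3W$, while $\wv_1=l\wedge\omega_1$ and $\wv_2=l\wedge\omega_2$ with $\omega_i\in\bigwedge^2P_i$, $P_i=\vec\wv_i\cap W$ a $2$-plane, so $\eta=\omega_1+\omega_2$. The spanning hypothesis gives $P_1\oplus P_2=:U$ of dimension $4$ with $\dim(C\cap U)=1$. If $P_1\cap P_2\neq0$ then $\eta$ has rank $2$ and $\wv$ rank $\le2$, again discarded; otherwise $\eta$ has rank $4$ with support $U$, and $C\cap U=\langle w\rangle$ is a line.

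Putting $\eta$ in Darboux form and normalizing $w$, the final dichotomy is whether $C$ meets one of $\vec\wv_1,\vec\wv_2$ (equivalently $w\in P_1\cup P_2$) or is disjoint from both: the ``chain'' alternative yields case (iii), where a generator of $C\cap\vec\wv_i$ furnishes the second admissible choice of $l$ and $r=1$, while the ``disjoint'' alternative yields case (i) with $r=1$; in each I would exhibit the explicit change of basis to the stated normal form. The main obstacle is precisely this last dichotomy: cases (i) and (iii) share all the coarse invariants produced so far — $\eta$ of rank $4$ and $C$ meeting $\mathrm{supp}\,\eta$ in a single line — and are separated only by the finer position of $C$ relative to the symplectic form $\eta$, so carrying out the Darboux normalization and verifying that the two positions give genuinely the two distinct normal forms is where the real computation lies. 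A secondary difficulty is the residual case $k=0$, where one must show that $\bar\wv$ decomposable together with the spanning hypothesis forces two of the planes to meet (supplying a shared line and reducing to $k=2$) or else collapses $\wv$ to rank $\le2$; and throughout one must keep discarding the configurations in which an unexpectedly large pairwise intersection drops the rank, which is legitimate only because the given decomposition is minimal.
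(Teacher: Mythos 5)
Your skeleton is genuinely different from the paper's proof, and its first half is sound. The paper proceeds by listing the four intersection configurations of three planes spanning $\mathbb{P}^6$ (pairwise disjoint as in \eqref{presentation:rank3}, one pairwise point, a common point, a chain) and then, \emph{for each configuration, treating $l$ as an unknown vector} and imposing the vanishing of all $5\times 5$ minors of $\mathcal{C}^{1,3}_{l\wedge \wv}$; that computation determines at once \emph{every} admissible $l$ (none for \eqref{presentation:rank3}, exactly the stated ones otherwise). Your quotient step is correct: wedging with $l$ identifies $\bigwedge^3(\mathbb{C}^7/\langle l\rangle)$ with $\{\tau\in\bigwedge^4\mathbb{C}^7 : l\wedge\tau=0\}$, decomposables in that image are divisible by $l$, so $r<2$ is indeed equivalent to $\bar{\wv}$ being decomposable or zero; your cases $k=3$ and $k=2$ are correct and completable as sketched (the dichotomy $w\in P_1\cup P_2$ versus $w\notin P_1\cup P_2$ really does separate \eqref{T3} from \eqref{T1}, by an explicit change of basis). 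Your insistence on minimality, absent from the statement, is in fact a needed repair rather than a flaw: with $\wv_1=e_0\wedge e_1\wedge e_2$, $\wv_2=e_0\wedge e_1\wedge e_3$, $\wv_3=e_4\wedge e_5\wedge e_6$ and $l=e_0$ the hypotheses hold verbatim, yet $\wv$ has only six essential variables while all three normal forms have seven; the paper's own enumeration tacitly assumes pairwise intersections are at most points, as in Lemma \ref{rank3}.

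There are, however, two genuine gaps, both residues of not solving for all $l$ as the paper does. First, the case $k=0$ is posed, not proved (``one must show\dots''), and it cannot be dismissed: the hypothesis does not tie $l$ to the chosen decomposition, and rank-$3$ decompositions here are far from unique (the paper stresses that $\sigma_3\bigl(\mathbb{G}(3,\mathbb{C}^7)\bigr)$ is defective), so an admissible $l$ incident to none of the given $\vec\wv_i$ is a live possibility that must be analysed, not excluded by fiat. Second, and more seriously, in the cases $k\le 1$ you replace $l$ by a generator $m$ of $\vec\wv_i\cap\vec\wv_j$ and normalize with respect to $m$; this proves that $\wv$ admits one of the normal forms \emph{adapted to $m$}, but the lemma asserts where the \emph{original} $l$ sits ($l=e_0$ in \eqref{T1} and \eqref{T2}, $l\in\{e_0,e_3\}$ in \eqref{T3}), and nothing in your argument relocates $l$ afterwards. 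To close this you must still determine, for each normal form, the full locus $\{l : \operatorname{rank}(l\wedge\wv)<2\}$ --- for instance to rule out that a $k=1$ pair $(\wv,l)$ normalizes to \eqref{T1}, where only $e_0=m\neq l$ would be admissible --- and that determination is exactly the minor computation your reduction was designed to avoid. So the geometric reduction buys a cleaner case structure for $k\ge 2$, but as written it silently re-imports the paper's computation at the two points where the statement pins down $l$ itself.
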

 
\begin{proof} Clearly in all the three listed cases the skew-symmetric rank $r$ of $l\wedge \wv$ is smaller than 2. We only need to prove that these are the only possibilities.
Let $l=a_0e_0 + \cdots + a_6e_6$, $a_i \in \mathbb{C}$, $i=0, \ldots, 6$.

The only possibilities for the $[\vec \wv_i]$'s to span a $\mathbb{P}^6$ are:
\begin{itemize}
\item the $[\vec \wv_i]$'s don't intersect pairwise, in which case $\wv$ can be written as in \eqref{presentation:rank3};
\item exactly two of them meet at a point, in which case $\wv$ is as in \eqref{T1}; 
\item the three of them meet at the same point, in which case $\wv$ is as in \eqref{T2}; 
\item $[\vec \wv_1]\cap [\vec \wv_2]=P\neq Q=[\vec \wv_2]\cap [\vec \wv_3]$, in which case $\wv$ is as in \eqref{T3}.
\end{itemize}

If  $r<2$ then $\dim (\ker \mathcal{C}_{l\wedge \wv}^{1,3}) \geq \dim (\mathbb{C}^7)- \dim (\vec \wv')=3$ where $\wv'$ is any skew-symmetric rank 1 element in $\bigwedge^4\mathbb{C}^7$.
Therefore we need to impose that all the ${5\times 5}$ minors of $\mathcal{C}_{l\wedge \wv}^{1,3}$ vanish. A straightforward computation shows that the only solution for tensors of the form \eqref{presentation:rank3} is $l=0$, while for the other cases  $l$ is as in the statement.
\end{proof}

Remark that if ${\wv\in \bigwedge^3\mathbb{C}^7}$ is as in case \eqref{VII}, then the skew-symmetric rank of $p\wedge \wv$ is $1$ and $\wv$ is either as in \eqref{T1} or as in \eqref{T3}. 

\begin{lemma}\label{rank3} Let $\wv=\wv_1+\wv_2+\wv_3\in \bigwedge^3 \mathbb{C}^7$ with $\wv_i\in \mathbb{G}( 3, \mathbb{C}^7)$ be a minimal presentation of skew-symmetric rank 3 of $\wv$. Let $[\vec \wv_i]$ the planes in $\mathbb{P}^6$ corresponding to  $\wv_i$, $i=1,2,3$ and assume that $\langle [\vec \wv_1], [\vec \wv_2], [\vec \wv_3]\rangle=\mathbb{P}^6$. Then $\wv$ lies  in one of the orbits \eqref{VI}, \eqref{VII}, \eqref{VIII} and \eqref{IX}.
\end{lemma}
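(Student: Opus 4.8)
The plan is to read the mutual position of the three planes $[\vec\wv_1],[\vec\wv_2],[\vec\wv_3]$ off the hypotheses and to match each admissible configuration with one of the normal forms \eqref{VI}--\eqref{IX}. First I would record the constraint coming from minimality. If some pair of the $3$-dimensional subspaces $\vec\wv_i,\vec\wv_j$ met in dimension $\ge 2$, then by Lemma \ref{dimensionrmk} (applied with $d=3$, so $d-1=2$) the tensor $\wv_i+\wv_j$ would already have rank $1$, and $\wv=(\wv_i+\wv_j)+\wv_k$ would be a sum of two decomposables, contradicting the minimality of the presentation. Hence $\dim(\vec\wv_i\cap\vec\wv_j)\le 1$ for every pair.

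Next I would run a dimension count. Writing $d_{ij}=\dim(\vec\wv_i\cap\vec\wv_j)\in\{0,1\}$ and $d_{123}=\dim(\vec\wv_1\cap\vec\wv_2\cap\vec\wv_3)$, the identity $\dim(\vec\wv_1+\vec\wv_2+\vec\wv_3)=9-d_{12}-\dim\bigl((\vec\wv_1+\vec\wv_2)\cap\vec\wv_3\bigr)$ together with the spanning hypothesis $\dim(\vec\wv_1+\vec\wv_2+\vec\wv_3)=7$ forces $\dim\bigl((\vec\wv_1+\vec\wv_2)\cap\vec\wv_3\bigr)=2-d_{12}$. Since $(\vec\wv_1+\vec\wv_2)\cap\vec\wv_3\supseteq(\vec\wv_1\cap\vec\wv_3)+(\vec\wv_2\cap\vec\wv_3)$, of dimension $d_{13}+d_{23}-d_{123}$, this yields the numerical constraint $d_{12}+d_{13}+d_{23}-d_{123}\le 2$. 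Enumerating the tuples $(d_{12},d_{13},d_{23};d_{123})$ up to permutation, exactly four survive: $(0,0,0;0)$ (the planes are pairwise skew), $(1,0,0;0)$ (exactly one pair meets in a point), $(1,1,0;0)$ (a chain, two pairs meeting in two distinct points), and $(1,1,1;1)$ (all three share one common point). The only remaining position with pairwise intersections of dimension $\le 1$, namely the ``triangle'' $(1,1,1;0)$, violates the constraint and spans only a $\mathbb{P}^5$; this is precisely the position of the planes in case \eqref{IV}, which is why the spanning hypothesis rules it out. This recovers the list already used in the proof of Lemma \ref{increasing:degree:P6}.

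Then I would normalize each surviving configuration by choosing an adapted basis of $\mathbb{C}^7$. The common-point case is immediate: if $\langle\mu\rangle=\vec\wv_1\cap\vec\wv_2\cap\vec\wv_3$, then each $\wv_i=\mu\wedge\omega_i$ with $\omega_i$ a rank-one bivector, so $\wv=\mu\wedge(\omega_1+\omega_2+\omega_3)$, and since the presentation spans, $\omega_1+\omega_2+\omega_3$ is a full rank-$3$ bivector on the complementary $6$-space; a symplectic basis there gives $e_0\wedge(e_1\wedge e_2+e_3\wedge e_4+e_5\wedge e_6)$, i.e.\ \eqref{VI}. In the remaining three cases I would put $\wv_1,\wv_2$ in standard position and use the prescribed dimension of $\vec\wv_3\cap(\vec\wv_1+\vec\wv_2)$ to fix the last basis vectors; the chain case yields \eqref{VIII}, the one-pair case yields \eqref{VII}, and the pairwise-skew case yields the three-term presentation \eqref{presentation:rank3}, i.e.\ \eqref{IX}. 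In every case $\wv$ lands in one of \eqref{VI},\eqref{VII},\eqref{VIII},\eqref{IX}.

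The step I expect to be the main obstacle is this explicit normalization in the pairwise-skew and one-pair cases: unlike \eqref{VI} and \eqref{VIII}, the target forms \eqref{IX} and \eqref{VII} are written with four summands, so one must exhibit the change of basis turning the natural three-term expression into the listed form (for \eqref{IX} this is exactly the identification of \eqref{presentation:rank3} with it). To make the exclusion of \eqref{IV} airtight without this bookkeeping, I would instead argue that no representative of \eqref{IV} admits a spanning minimal presentation: writing $\mathbb{C}^7=W\oplus\langle e\rangle$ with $W$ the $6$-dimensional span of \eqref{IV}, and each $\wv_i=P_i+e\wedge Q_i$ with $P_i\in\bigwedge^3 W$ and $Q_i\in\bigwedge^2 W$ a (necessarily decomposable) bivector, the vanishing of the $e$-component forces $Q_1+Q_2+Q_3=0$. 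If exactly two $Q_i$ are nonzero they are proportional, so two of the planes meet in dimension $2$, contradicting minimality via Lemma \ref{dimensionrmk}; if all three are nonzero, then $Q_1+Q_2=-Q_3$ has rank $1$, forcing a common vector $\mu$ in all three $\langle\alpha_i,\beta_i\rangle$, hence $\mu\in\vec\wv_i$ for all $i$ and $\wv$ divisible by $\mu$ — but a representative of \eqref{IV} is divisible by no vector. Thus a spanning presentation cannot be of type \eqref{IV}; combined with the fact that $\wv$ has rank $3$ and that, by the Schouten--Gurevich classification, any rank-$3$ trivector in $\bigwedge^3\mathbb{C}^7$ is equivalent to one of \eqref{IV},\eqref{VI},\eqref{VII},\eqref{VIII},\eqref{IX}, this gives the assertion.
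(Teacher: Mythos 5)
Your proposal is correct, but it closes the argument by a genuinely different mechanism than the paper. The shared part: both proofs use Lemma \ref{dimensionrmk} to bound pairwise intersections by a point, enumerate the same four admissible configurations (pairwise skew, one incident pair, chain, common point), discard the triangle configuration because it spans only a $\mathbb{P}^5$ (your count $d_{12}+d_{13}+d_{23}-d_{123}\le 2$ is a cleaner version of the paper's observation), and identify the common-point, chain and skew cases with \eqref{VI}, \eqref{VIII} and \eqref{IX}. The divergence is at the remaining configuration: the paper pairs it with \eqref{VII} by counting --- four configurations, four orbits, hence a one-to-one correspondence --- whereas you sidestep the identification entirely: since $\wv$ has rank $3$, the classification leaves only \eqref{IV}, \eqref{VI}, \eqref{VII}, \eqref{VIII}, \eqref{IX} (the ranks of \eqref{II}, \eqref{III}, \eqref{V}, \eqref{X} being $1,2,2,4$, the last already settled via the degree-$7$ hypersurface $\sigma_3$), and you rule out \eqref{IV} directly by showing a six-variable tensor admits no spanning minimal presentation, via $\wv_i=P_i+e\wedge Q_i$ and $Q_1+Q_2+Q_3=0$. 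What each buys: your route is logically tighter, since the paper's bijection step tacitly assumes each configuration class determines a single, distinct orbit, which is not argued; but the paper's proof extracts strictly more, namely that the one-incident-pair configuration realizes \eqref{VII}, so \eqref{VII} has a three-term presentation and rank exactly $3$ --- a fact used afterwards in Algorithm 2 and the table, and which your proof leaves open (as you candidly flag). Two micro-steps of yours deserve a line each: when all three $Q_i\neq0$ you must exclude three pairwise distinct intersection lines, but then the planes $\langle\alpha_i,\beta_i\rangle$ lie in a common $3$-space where the three decomposable bivectors are linearly independent, so the sum cannot vanish; and the indivisibility of \eqref{IV} follows, e.g., from parity, since a trivector $\mu\wedge\eta$ has $1+2\,\mathrm{rank}(\eta)$ (odd) essential variables while \eqref{IV} has six.
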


\begin{proof} Since the span of $[\vec \wv_1], [\vec \wv_2] , [\vec \wv_3]$ is fixed to be a $\mathbb{P}^6$ we may consider another invariant that is preserved by the action of $SL(7)$, namely the intersection of the $[\vec \wv_i]$'s. 
%Let $\hat [\vec \wv_i]$ be the $\mathbb{C}^3$ such that $\mathbb{P}(\hat [\vec \wv_i])=[\vec \wv_i]$ for $i=1, \ldots ,4$. 
We list all possible configurations for the $ [\vec \wv_i]$'s with respect to their intersections, and write their general element in each case.  First of all observe that $[\vec \wv_i]\cap [\vec \wv_j]$ is at most a point since otherwise $\wv_1+\wv_2+\wv_3$ would not be a minimal presentation of skew-symmetric rank 3 for $\wv$; in fact $\wv_i+\wv_j$ would have skew-symmetric rank 1.
We assume that $P_1,P_2,P_3\in \mathbb{P}^6$ are distinct points, $\{e_0, \ldots , e_6\}$ is a basis of $\mathbb{C}^7$ and $l\in \mathbb{C}^8$ is general enough.
\begin{enumerate}
\item\label{s3} None of them intersect each other
\[ e_0\wedge e_1\wedge e_2 + e_3\wedge e_4\wedge e_5 + e_6\wedge (e_0+ \cdots +e_6) \wedge l;\]
\item\label{differentVII} Exactly two of them intersect in one point
\[ e_0\wedge e_1\wedge e_2 + e_0\wedge e_3\wedge e_4 + e_5\wedge e_6\wedge (e_0+ \cdots +e_6)
;\]
\item\label{almostVI}  $ [\vec \wv_1]\cap  [\vec \wv_2]\cap [\vec \wv_3]=P_1$ 
\[ e_0\wedge e_1\wedge e_2 + e_0\wedge e_3\wedge e_4 + e_0\wedge e_5\wedge e_6
;\]
\item\label{almostVIII} $ [\vec \wv_1]\cap  [\vec \wv_2]=P_1$ and $[\vec \wv_2] \cap [\vec \wv_3]=P_2$ 
\[ e_0\wedge e_1\wedge e_2 + e_0\wedge e_3\wedge e_4 + e_3\wedge e_5\wedge e_6
.\]
\end{enumerate}
Observe that if $[\vec \wv_1]\cap [\vec \wv_2]=P_1$, $[\vec \wv_2]\cap [\vec \wv_3]=P_2$ and $[\vec \wv_1]\cap [\vec \wv_3]=P_3$ then $\langle [\vec \wv_1], [\vec \wv_2] , [\vec \wv_3]\rangle$ will be at most a $\mathbb{P}^5$ contradicting our hypothesis.

We can easily recognize that \eqref{s3} is the same presentation as \eqref{presentation:rank3} so it describes the same orbit of
\eqref{IX}. Moreover \eqref{almostVI} is the same presentation as \eqref{VI}, and \eqref{almostVIII} is the same presentation of \eqref{VIII}. 

Each of these elements must belong to a different orbit in \cite{gur1}. We have just seen that the elements in the orbits listed in the statement have skew-symmetric rank at least $3$. Since we have exactly $4$ intersection configurations and $4$ orbits there must be a one to one correspondence.  Therefore \eqref{differentVII} must be an element in the same orbit of \eqref{VII}. 
\end{proof}

It's very easy to check that if rk$(\wv)=1$ then $\dim \ker \mathcal{C}_{\wv}^{1,2}=4$, while if rk$(\wv)=2$ then $\dim \ker \mathcal{C}_{\wv}^{1,2}=1,2$ while in all other cases $\dim \ker \mathcal{C}_{\wv}^{1,2}=0, \dim \ker \mathcal{C}_{\wv}^{2,1}=14$.
Now we need to distinguish the cases of skew-symmetric rank 3 form the one of skew-symmetric rank 4. Since all the orbit closures of those of skew-symmetric rank 3 are contained in $\sigma_3(\mathbb{G}(3, \mathbb{C}^7))$ whose equations is classically know (see \cite{L} and \cite[Thm. 5.1]{aop}) one just needs to test the given tensor on that equation.

We are now ready to write the algorithm for the skew-symmetric rank decomposition in the case of $\bigwedge^3\mathbb{C}^7$.
\medskip

\begin{breakablealgorithm}
\renewcommand{\thealgorithm}{2}
\caption{: Algorithm for the skew-symmetric rank and a decomposition of  an element  in $\bigwedge^3\mathbb{C}^7$. }
\leftline{INPUT: $\wv\in \bigwedge^3 \mathbb{C}^7$.}
\leftline{OUTPUT: Decomposition and skew-symmetric rank of $\wv$}
%\begin{boxedminipage}{125mm}
    \begin{algorithmic}[1]
\STATE Compute $\dim \ker \mathcal{C}_{\wv}^{1,2}$;
\IF{$\dim \ker \mathcal{C}_{\wv}^{1,2}=4$}
\STATE{$\wv$ has skew-symmetric rank 1 \AND
$$\wv=v_0\wedge v_1 \wedge v_2,$$ 
where $\langle v_0, v_1, v_2 \rangle = (\ker \mathcal{C}_{\wv}^{1,2})^{\perp}$.}
\ENDIF
\IF{$\dim \ker \mathcal{C}_{\wv}^{1,2}=2$}
\STATE{$\wv$ has skew-symmetric rank 2, \AND
$$\wv=v_0\wedge v_1 \wedge v_2+v_0\wedge v_3\wedge v_4,$$ 
 		where $\langle v_0, v_1, v_2 ,v_3, v_4\rangle = (\ker \mathcal{C}_{\wv}^{1,2})^{\perp}:=K$, $v_0\in \ker (K \stackrel{\wedge 		\wv}	{\longrightarrow} \bigwedge^4 \mathbb{C}^7)$, \AND $\langle v_1^*\wedge v_2^*-v_3^*\wedge v_4^*, (v_i^*\wedge 			v_j^*)_{i\in \{1,2\}, j\in \{3,4\}}\rangle\subset\ker \mathcal{C}_{\wv}^{2,1}$.}		
\ENDIF
\IF{$\dim \ker \mathcal{C}_{\wv}^{1,2}=1$}
\STATE{$\wv$ has skew-symmetric rank 2 \AND
$$\wv=v_0\wedge v_1 \wedge v_2+v_3\wedge v_4\wedge v_5,$$ where $\langle v_0, v_1, v_2 ,v_3, v_4, v_5\rangle = (\ker 		\mathcal{C}_{\wv}^{1,2})^{\perp}$ \AND $\langle v_i^*\wedge v_j^*\rangle_{i\in \{0,1,2\}, j\in \{3,4,5\}}\subset\ker 
\mathcal{C}_{\wv}^{2,1}$.}
\ENDIF
\IF{If $\dim \ker \mathcal{C}_{\wv}^{1,2}=0$}
\STATE\label{checksecant}{check wether $\wv\in\sigma_3(\mathbb{G}(3, \mathbb{C}^7))$ using \cite[Thm. 5.1]{aop}},
\IF\TRUE\label{checkedsecantyes}
\STATE go to Step \ref{AOP}
\ELSE\STATE\label{checkedsecantno} go to Step \ref{RANK4}
\ENDIF
\ENDIF
\STATE\label{AOP}{\footnotesize{\COMMENT{By Steps \ref{checksecant} and \ref{checkedsecantyes} we know that are in the cases in which $\wv\in\sigma_3(\mathbb{G}(3, \mathbb{C}^7))$ hence} }}
%\STATE 
\STATE {\footnotesize{\COMMENT{In order to have the presentation of $\wv$ with 3 summands}}} Compute the Kernel and  the Image of the following multiplication map by $\wv$:
\begin{equation}\label{multmap}\mathbb{C}^7 \stackrel{\wedge \wv}{\longrightarrow} {\textstyle \bigwedge^4 \mathbb{C}^7},
\end{equation}
\IF{the kernel of \eqref{multmap} is non-zero}
\STATE $\wv$ has skew-symmetric rank 3 \AND \[
\wv=v_0\wedge v_1\wedge v_2 + v_0\wedge v_3\wedge v_4 + v_0\wedge v_5\wedge v_6
        		\]
as in Lemma \ref{rank3} item \eqref{almostVI}, where $v_0$ is a generator for the kernel of \eqref{multmap}, \AND $\langle 			v_1^*\wedge v_2^*-v_3^*\wedge v_4^*, v_1^*\wedge v_2^*-v_5^*\wedge v_6^*, (v_i^*\wedge v_j^*)_{i\in \{1,2\}, j\in 				\{3,4,5,6\}}, (v_i^*\wedge v_j^*)_{i\in \{3,4\}, j\in \{5,6\}}\rangle\subset\ker \mathcal{C}_{\wv}^{2,1}$;
\ENDIF
\IF{the kernel of \eqref{multmap} is zero \AND the image of \eqref{multmap} meets the Grassmannian in two points} 
\STATE $\wv$ has skew-symmetric rank 3 \AND
\[
\wv= v_0\wedge v_1\wedge v_2 + v_0\wedge v_3\wedge v_4 + v_3\wedge v_5\wedge v_6
\]
as in  Lemma \ref{rank3} item  \eqref{almostVIII}, where $v_0$ and $v_3$ are pre-images of the two points in the Grassmannian, 	\AND $\langle v_1^*\wedge v_2^*-v_3^*\wedge v_4^*, v_0^*\wedge v_4^*+v_5^*\wedge v_6^*, (v_0^*\wedge v_i^*)_{i\in \{5,6\}}, 	(v_i^*\wedge v_j^*)_{i\in \{1,2\}, j\in \{3,4,5,6\}}, (v_4^*\wedge v_i^*)_{i\in \{5,6\}}\rangle\subset\ker \mathcal{C}_{\wv}^{2,1}$;
\ENDIF
\IF{the kernel of \eqref{multmap} is zero \AND the image of $\wedge \wv$ meets the Grassmannian in one point}
\STATE $\wv$ has skew-symmetric rank 3 \AND
\[
\wv= v_0\wedge v_1\wedge v_2 + v_0\wedge v_3\wedge v_4 + v_5\wedge v_6\wedge (v_0+ \cdots +v_6)
\]
as in  Lemma \ref{rank3} item  \eqref{differentVII}, where $v_0$ is a pre-image of the point in the Grassmannian, and $\langle 		v_1^*\wedge v_2^*-v_3^*\wedge v_4^*,  ((v_0^*-v_i^*)\wedge v_j^*)_{i\in \{1,2,3,4\}, j\in \{5,6\}}, (v_i^*\wedge v_j^*)_{i\in \{1,2\}, j\in 	\{3,4\}} \rangle\subset\ker \mathcal{C}_{\wv}^{2,1}$;
\ENDIF
\IF{the kernel of \eqref{multmap} is zero \AND the image of the map $\wedge \wv$ does not meet the Grassmannian}
\STATE\label{step_sigma_3} $\wv$ has skew-symmetric rank 3 \AND \[
\wv= v_0\wedge v_1\wedge v_2 + v_3\wedge v_4\wedge v_5 + v_6\wedge (v_0+ \cdots +v_6) \wedge l
\]
as in  Lemma \ref{rank3} item  \eqref{s3}, where ${v_0,\ldots,v_6,l}$ are such that the preimage of the Grassmannian by the map
	\begin{equation}\label{multmap25}
	{\textstyle \bigwedge^2\mathbb{C}^7} \stackrel{\wedge \wv}{\longrightarrow} 
	  {\textstyle \bigwedge^5 \mathbb{C}^7}
	\end{equation}
	is the set of elements that can be written
        either as
$$(a_0v_0 + a_1v_1 + a_2v_2) \wedge (a_3v_3 + a_4v_4 + a_5v_5)$$
or as
$$(a_0v_0 + a_1v_1 + a_2v_2) \wedge (a_6v_6 + a_7(v_0+\cdots+v_6) + a_8l)$$
or as
$$(a_3v_3 + a_4v_4 + a_5v_5) \wedge (a_6v_6 + a_7(v_0+\cdots+v_6) + a_8l)$$
\ENDIF
\STATE\label{RANK4} {\footnotesize{\COMMENT{By Steps \ref{checksecant} and \ref{checkedsecantno} we know that we are in the case in which $\wv\not\in\sigma_3(\mathbb{G}(3, \mathbb{C}^7))$ hence} }}

$\wv$ has skew-symmetric rank 4 \AND %it is always possible to find $\lambda_0, \ldots , \lambda_6\in \mathbb{C}$ such that 
$\wv$ can be written as follows:
%\[
%v_0\wedge v_1 \wedge v_2 +v_3\wedge v_4 \wedge v_5  +v_6\wedge(v_0+\cdots +v_6)\wedge(\lambda_0v_0+ \cdots +\lambda_6v_6)+\tilde\wv
%\]
\[
v_0\wedge v_1 \wedge v_2 +v_3\wedge v_4 \wedge v_5  +v_6\wedge (v_0+\cdots +v_6)\wedge l_0+  l_1\wedge l_2 \wedge l_3
\]
where %$\tilde\wv$ is any skew-symmetric  random element of $\mathbb{G}(3,\mathbb{C}^7)$, and 
${v_0,\ldots,v_6,l_0\ldots,l_3}$ are such that $l_1\wedge l_2 \wedge l_3$ is a generic element in the Grassmannian, and $v_0,\ldots,v_6,l_0$ are obtained running Step \ref{step_sigma_3} of the present algorithm on one of the 7 points of $\langle \wv, l_1\wedge l_2 \wedge l_3\rangle \cap  \sigma_3(\mathbb{G}(3, \mathbb{C}^7))$. If
 something goes wrong change either the starting point $l_1\wedge l_2 \wedge l_3$ or one of the seven points on $\langle \wv, l_1\wedge l_2 \wedge l_3\rangle \cap  \sigma_3(\mathbb{G}(3, \mathbb{C}^7))$ until the algorithm ends.
    \end{algorithmic}
\end{breakablealgorithm}

\subsection{Trivectors in  $\mathbb{P}^7$} 
If $n=7$ there are 22 orbits in the projective space. The first nine normal forms for a trivector in $\bigwedge^3 \mathbb{C}^8$, with some meaning in the projective space, are the same as above from (\ref{II}) to (\ref{X}), the other are described in \cite[Chap. VII, \S 35.4]{gur1}:

\begin{equation}\tag{XI}\label{XI}
[	 a\wedge q\wedge p+ 
b\wedge r\wedge p +
 c\wedge s\wedge p+ 
 c\wedge r\wedge t]
\end{equation}	

\begin{equation}\tag{XII}\label{XII}
[  q\wedge r\wedge s+
 a\wedge q\wedge p+ 
 b\wedge r\wedge p +
 c\wedge s\wedge p+ 
 c\wedge r\wedge t]
\end{equation}	

\begin{equation}\tag{XIII}\label{XIII}
[  a\wedge b\wedge c+
 q\wedge r\wedge s+
 a\wedge q\wedge p+ 
 c\wedge r\wedge t]
\end{equation}	

\begin{equation}\tag{XIV}\label{XIV}
[  a\wedge b\wedge c+
 q\wedge r\wedge s+
 a\wedge q\wedge p+ 
 b\wedge r\wedge p +
 c\wedge r\wedge t]
\end{equation}	

\begin{equation}\tag{XV}\label{XV}
[  a\wedge b\wedge c+
 q\wedge r\wedge s+
 a\wedge q\wedge p+ 
 b\wedge r\wedge p +
 c\wedge s\wedge p+ 
 c\wedge r\wedge t]
\end{equation}	

\begin{equation}\tag{XVI}\label{XVI}
[ a\wedge q\wedge p+ 
 b\wedge s\wedge t+
 c\wedge r\wedge t]
\end{equation}	

\begin{equation}\tag{XVII}\label{XVII}
[ a\wedge q\wedge p+ 
 b\wedge r\wedge p +
 b\wedge s\wedge t+
 c\wedge r\wedge t]
\end{equation}	

\begin{equation}\tag{XVIII}\label{XVIII}
[ q\wedge r\wedge s+
 a\wedge q\wedge p+ 
 b\wedge r\wedge p +
 b\wedge s\wedge t+
 c\wedge r\wedge t]
\end{equation}	

\begin{equation}\tag{XIX}\label{XIX}
[ a\wedge q\wedge p+ 
 b\wedge r\wedge p +
 c\wedge s\wedge p+ 
 b\wedge s\wedge t+
 c\wedge r\wedge t]
\end{equation}	

\begin{equation}\tag{XX}\label{XX}
[ q\wedge r\wedge s+
 a\wedge q\wedge p+ 
 b\wedge r\wedge p +
 c\wedge s\wedge p+ 
 b\wedge s\wedge t+
 c\wedge r\wedge t]
\end{equation}	

\begin{equation}\tag{XXI}\label{XXI}
[ a\wedge b\wedge c+
 q\wedge r\wedge s+
 a\wedge q\wedge p+ 
 b\wedge s\wedge t%+
 %c\wedge r\wedge t
 ]
\end{equation}	

\begin{equation}\tag{XXII}\label{XXII}
[ a\wedge b\wedge c+
 q\wedge r\wedge s+
 a\wedge q\wedge p+ 
 b\wedge r\wedge p +
 b\wedge s\wedge t+
 c\wedge r\wedge t]
\end{equation}	

\begin{equation}\tag{{XXIII}}\label{XXIII}
[ a\wedge b\wedge c+
 q\wedge r\wedge s+
 a\wedge q\wedge p+ 
 b\wedge r\wedge p +
 c\wedge s\wedge p+ 
 b\wedge s\wedge t+
 c\wedge r\wedge t]
\end{equation}	

The containment diagram of the orbit closure is described in {\cite{Dj,bv}}. 

\medskip

From that diagram and for the normal forms that we have, it's easy to see that \eqref{XVI}  has skew-symmetric rank 3 since it's orbit closure is not contained in $\sigma_2(\mathbb{G}(3, \mathbb{C}^8))$  and we have a presentation with $3$ summands. 

\medskip 

Similarly an element (\ref{XXI}) has skew-symmetric rank  4 since  it's orbit closure is not contained in $\sigma_3(\mathbb{G}(3, \mathbb{C}^8))$  and we have a presentation with $4$ summands.

\medskip

Moreover for an element in   (\ref{XIX}) the skew-symmetric rank is 3 because the closure of its orbit is $\sigma_3(\mathbb{G}(3, \mathbb{C}^8))$. 

\medskip

Finally for an element in  (\ref{XXIII}) the skew-symmetric  rank is 4 because the closure of its orbit is $\sigma_4(\mathbb{G}(3, \mathbb{C}^8))$ which fills the ambient space.

\medskip

The normal form of \eqref{XX} is a presentation of 6 summands, 5 of which coincides with the presentation of the normal form of \eqref{XIX} which has skew-symmetric rank 3. Therefore since the orbit closure of \eqref{XX} is not contained in $\sigma_3(\mathbb{G}(3, \mathbb{C}^8))$ the skew-symmetric rank of it is exactly 4.

\medskip

{The normal form of \eqref{XII} coincides with the normal form of \eqref{VII} plus $c\wedge r \wedge t$, therefore its skew-symmetric rank is at most 3+1, but since  the orbit closure of  \eqref{XII}  is not contained in $\sigma_3(\mathbb{G}(3, \mathbb{C}^8))$ the skew-symmetric rank of it is exactly 4.}

\medskip

{The normal form of \eqref{XIV} coincides with the normal form of \eqref{IX} plus $c\wedge r \wedge t$, therefore its skew-symmetric rank is at most 3+1, but since  the orbit closure of  \eqref{XII}  is not contained in $\sigma_3(\mathbb{G}(3, \mathbb{C}^8))$ the skew-symmetric rank of it is exactly 4.}

\medskip

Analogously the normal form of \eqref{XV} has a presentation with only one more element than the normal form of \eqref{X} so the skew-symmetric rank of the elements in \eqref{XV} is either 4 or 5 even though we have a presentation with 6 summands.

\medskip

{By reordering the variables, it is not difficult to see that \eqref{XVIII} is nothing else then \eqref{VII} plus $a\wedge q \wedge p$, therefore its skew-symmetric rank is at most 3+1, but since  the orbit closure of  \eqref{XVIII}  is not contained in $\sigma_3(\mathbb{G}(3, \mathbb{C}^8))$ the skew-symmetric rank of it is exactly 4.}
\medskip

{The normal form of \eqref{XXII} can be written by subtracting $c\wedge s \wedge p$ to the normal form of \eqref{XXIII} which has rank 4
 therefore the skew-symmetric rank of the elements in the orbit of \eqref{XXII} is either 4 or 5.}
\medskip

\begin{lemma}\label{increasing:degree:P7} 
Let ${\wv=\wv_1+\wv_2+\wv_3\in \bigwedge^3 \mathbb{C}^8}$ with ${\wv_i\in \mathbb{G}( 3, \mathbb{C}^8)}$ be a minimal presentation of skew-symmetric rank 3 of $\wv$. Let ${[\vec \wv_i]}$ be the planes in $\mathbb{P}^7$ corresponding to  $\wv_i$, ${i=1,2,3}$, and assume that ${\langle [\vec \wv_1], [\vec \wv_2], [\vec \wv_3]\rangle=\mathbb{P}^7}$. Then $\wv$ lies either in the orbit of \eqref{XIX} or in the orbit of \eqref{XVI}.
\end{lemma}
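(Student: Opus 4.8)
The plan is to follow the strategy of Lemma \ref{rank3}: since the span of the three planes is fixed to be $\mathbb{P}^7$, I classify the possible mutual-intersection patterns of $[\vec\wv_1],[\vec\wv_2],[\vec\wv_3]$, an invariant preserved by the $SL(8)$-action, and then match each surviving pattern with an orbit. First I would record the two constraints. Because $\wv_1+\wv_2+\wv_3$ is a \emph{minimal} decomposition, no $\wv_i+\wv_j$ can have rank $1$; by Lemma \ref{dimensionrmk} this forces $\dim(\vec\wv_i\cap\vec\wv_j)\le 1$, so any two of the planes meet in at most a point. The hypothesis $\langle[\vec\wv_1],[\vec\wv_2],[\vec\wv_3]\rangle=\mathbb{P}^7$ means $\dim(\vec\wv_1+\vec\wv_2+\vec\wv_3)=8$, i.e.\ the nine spanning vectors of the three planes satisfy exactly one linear relation.

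Next I would eliminate every pattern having two or more pairwise point-intersections by a direct count of independent spanning vectors. If one plane meets the other two in distinct points while these two are disjoint (a chain), one finds only $7$ independent spanning vectors; the same count gives $7$ when the three planes share a single common point, and $6$ in the triangle case where all three pairwise intersections are distinct points. In each of these the span is at most $\mathbb{P}^6$, contradicting the hypothesis. Hence at most one pair of planes meets, and exactly two patterns survive: \textbf{(a)} the three planes are pairwise disjoint, in which case $\sum_i\dim\vec\wv_i=9$ together with span $8$ forces a unique relation, $\dim\bigl(\vec\wv_3\cap(\vec\wv_1+\vec\wv_2)\bigr)=1$ with $\vec\wv_3\cap\vec\wv_1=\vec\wv_3\cap\vec\wv_2=0$; and \textbf{(b)} exactly one pair, say $\vec\wv_2,\vec\wv_3$, meets in a point, the remaining plane being disjoint from their sum, so that the $8$ spanning vectors are independent.

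For the matching I would produce an adapted basis in each case. In case \textbf{(b)}, choosing $\vec\wv_1=\langle e_0,e_1,e_2\rangle$, $\vec\wv_2=\langle e_3,e_4,e_7\rangle$, $\vec\wv_3=\langle e_5,e_6,e_7\rangle$ and absorbing scalars into the basis, $\wv$ becomes $e_0\wedge e_1\wedge e_2+e_3\wedge e_4\wedge e_7+e_5\wedge e_6\wedge e_7$, which is exactly the normal form \eqref{XVI}. In case \textbf{(a)}, I would set $\vec\wv_1=\langle e_0,e_1,e_2\rangle$, $\vec\wv_2=\langle e_3,e_4,e_5\rangle$, let $w$ span $\vec\wv_3\cap(\vec\wv_1+\vec\wv_2)$, and use the block subgroup $GL(3)\times GL(3)\subset GL(8)$ fixing $\vec\wv_1$ and $\vec\wv_2$ to normalize $w$ to $e_0+e_3$; completing $\vec\wv_3=\langle e_6,e_7,e_0+e_3\rangle$ shows that $\wv$ is $SL(8)$-equivalent to $e_0\wedge e_1\wedge e_2+e_3\wedge e_4\wedge e_5+e_6\wedge e_7\wedge(e_0+e_3)$, and in particular that \textbf{(a)} is a single orbit.

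It remains to identify \textbf{(a)} with \eqref{XIX}. Pattern \textbf{(a)} is the generic one, since three generic $3$-dimensional subspaces of $\mathbb{C}^8$ are pairwise disjoint and span the whole space; hence the generic rank-$3$ trivector has this configuration. As the closure of the orbit of \eqref{XIX} equals $\sigma_3\bigl(\mathbb{G}(3,\mathbb{C}^8)\bigr)$, the generic rank-$3$ trivector lies in the orbit of \eqref{XIX}, and because \textbf{(a)} is a single orbit we conclude that \textbf{(a)} is the orbit of \eqref{XIX}. This also yields the expected one-to-one correspondence: the two surviving patterns \textbf{(a)}, \textbf{(b)} match the two orbits \eqref{XIX}, \eqref{XVI}. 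I expect the main obstacle to be precisely this last identification: the listed normal form of \eqref{XIX} has five wedge-terms and does not visibly exhibit three pairwise-skew planes, so bridging pattern \textbf{(a)} to \eqref{XIX} relies on the genericity/$\sigma_3$ argument (the alternative being an explicit but tedious change of basis turning $e_0\wedge e_1\wedge e_2+e_3\wedge e_4\wedge e_5+e_6\wedge e_7\wedge(e_0+e_3)$ into the five-term normal form of \eqref{XIX}).
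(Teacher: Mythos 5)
Your proposal is correct and takes essentially the same route as the paper: the paper's proof likewise uses the spanning hypothesis and minimality to reduce to exactly two intersection patterns (three pairwise disjoint planes, or exactly one pair meeting in a point), writes down the same two adapted normal forms, and identifies the first with the generic element of $\sigma_3\bigl(\mathbb{G}(3,\mathbb{C}^8)\bigr)$, i.e.\ the orbit of \eqref{XIX}, and the second with the normal form of \eqref{XVI}. Your write-up simply makes explicit the dimension counts ruling out the other configurations and the single-orbit normalization that the paper's two-line proof leaves implicit.
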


\begin{proof}% We have only two possible cases: either the $[\vec \wv_i]$'s do not meet or exactly two of them meet at one point.
There always  exists a basis $\{e_0, \ldots , e_7\}$ of $\mathbb{C}^8$ such that $\wv$ can be written as follows:
\begin{itemize}
\item $\wv=e_0\wedge e_1\wedge e_2+e_3\wedge e_4\wedge e_5+e_6\wedge e_7\wedge (e_1+ \cdots +e_5)$,
\item $\wv=e_0\wedge e_1\wedge e_2+e_3\wedge e_4\wedge e_5+e_0\wedge e_6\wedge e_7$. 
\end{itemize}
%In both cases a straightforward computation shows that for any non zero element $l\in \mathbb{C}^8$ we have $\dim \ker \mathcal{C}_{l\wedge T}^{1,3}<4$ and this is sufficient to show that the skew-symmetric rank of $l\wedge T$ is at least 2.
The first case corresponds to a generic element of $\sigma_3(\mathbb{G}(3, \mathbb{C}^8))$ which is the orbit closure of \eqref{XIX}; the second one is the normal form of the orbit of \eqref{XVI}.
\end{proof}

%An element $T=a\wedge q\wedge p+b\wedge r\wedge p+c\wedge s\wedge p+c\wedge r\wedge t$ in the orbit of \eqref{XI} by the containment diagram of the orbit closure is described in \cite{bv} can only have either rank 3 or 4 since it is not contained in $\sigma_2(\mathbb{G}(\mathbb{P}^2, \mathbb{P}^7))$ and we have a presentation with 4 summands. Moreover, by Lemma \ref{increasing:degree:P7} $T$  cannot have skew-symmetric rank 3 because $p\wedge T$ has skew-symmetric rank 1.

By Lemma \ref{increasing:degree:P7}, the elements %of the orbit \eqref{XII} can only have skew-symmetric rank 4 or 5; 
%while those 
of orbits  \eqref{XI}, \eqref{XIII} and \eqref{XVII} have skew-symmetric rank 4.

\begin{remark}\label{cases45}  We are left with %\eqref{XII},
% \eqref{XIV}, 
\eqref{XV}, %\eqref{XVIII} 
and \eqref{XXII} where we still have to determine if the skew-symmetric rank is 4 or 5. In fact for all the other cases from \eqref{XI} to \eqref{XXIII} we have already shown that the skew-symmetric rank is 4 except for \eqref{XVI} and \eqref{XIX} where the skew-symmetric rank is 3.
\end{remark}

For the case of \eqref{XV} we follow the idea of \cite{W}.

\begin{remark}\label{West} 
Let $\wv\in \bigwedge^3\mathbb{C}^8$ be a tensor of skew-symmetric rank 4 in 8 essential variables, $a, b, c, p, q, r, s, t$. Then at least one of the terms in any skew-symmetric rank  4 representation of $\wv$ must contain a factor of the form $(t-v)$ for some ${v\in  \langle a, b, c, p, q, r, s \rangle}$  and the skew-symmetric rank of $(t-v) \wedge \wv$ is 3. To see this, write a skew-symmetric rank  4 representation of $\wv$: 
\[\wv = \wv_1 + \wv_2 + \wv_3 + \wv_4.\]
Observe that $t$ must occur in at least one summand, say ${\wv_1=(\alpha t + w_1)\wedge w_2 \wedge w_3}$, with ${\alpha\ne0}$, ${w_1\in \langle a, b, c, p, q, r, s \rangle}$; we may then rewrite ${\wv_1=(t + \alpha^{-1} w_1)\wedge \alpha w_2 \wedge w_3}$, and make ${-v=\alpha^{-1} w_1}$. 
\end{remark}

\begin{proposition}[Westwick] 
The skew-symmetric rank of an element in the orbit of  \eqref{XV} is 5.
\end{proposition}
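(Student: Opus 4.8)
The plan is to exclude skew-symmetric rank $4$; combined with Remark~\ref{cases45}, which already gives that an element of the orbit of \eqref{XV} has rank $4$ or $5$, this forces rank $5$. So suppose $\wv$ lies in the orbit of \eqref{XV} and admits a decomposition $\wv=\wv_1+\wv_2+\wv_3+\wv_4$ into four rank-one trivectors. In the essential variables $a,b,c,p,q,r,s,t$ the variable $t$ appears only in the summand $c\wedge r\wedge t$, hence is essential, so by Remark~\ref{West} one of the terms, say $\wv_1$, is divisible by a linear form $t-v$ with $v\in\langle a,b,c,p,q,r,s\rangle$. First I would pass to $\bar V=\mathbb{C}^8/\langle t-v\rangle\cong\mathbb{C}^7$: the image $\bar{\wv}_1$ vanishes and $\bar{\wv}_2+\bar{\wv}_3+\bar{\wv}_4$ is a sum of three rank-one trivectors, so the image $\bar{\wv}\in\bigwedge^3\bar V$ of $\wv$ has skew-symmetric rank at most $3$. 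Since $t\equiv v$ modulo $t-v$, this image is obtained from \eqref{XV} by the substitution $t\mapsto v$; taking $\{a,b,c,p,q,r,s\}$ as a basis of $\bar V$ it reads $\bar{\wv}=\Psi+c\wedge r\wedge v$, where $\Psi=a\wedge b\wedge c+q\wedge r\wedge s+a\wedge q\wedge p+b\wedge r\wedge p+c\wedge s\wedge p$ is exactly the normal form \eqref{X}.

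The problem is thereby reduced to a question about trivectors in $\bigwedge^3\mathbb{C}^7$: it suffices to prove that for \emph{every} $v\in\langle a,b,c,p,q,r,s\rangle$ the trivector $\bar{\wv}=\Psi+c\wedge r\wedge v$ has skew-symmetric rank $4$, since this contradicts the bound $\operatorname{rank}(\bar{\wv})\le 3$ obtained above. Here I would use that in $\bigwedge^3\mathbb{C}^7$ the rank-$4$ locus is exactly the dense orbit of \eqref{X}, whose complement is the hypersurface $\sigma_3(\mathbb{G}(3,\mathbb{C}^7))$, the closure of the orbit of \eqref{IX}; thus $\bar{\wv}$ has rank $4$ precisely when $\bar{\wv}\notin\sigma_3(\mathbb{G}(3,\mathbb{C}^7))$. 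Because $c\wedge r\wedge v$ depends only on the class of $v$ modulo $\langle c,r\rangle$, writing $v=\alpha a+\beta b+\gamma c+\delta p+\epsilon q+\zeta r+\theta s$ only the coefficients $\alpha,\beta,\delta,\epsilon,\theta$ matter; explicit unipotent changes of basis absorb three of them into $\Psi$ (the term $\alpha\,c\wedge r\wedge a$ by $b\mapsto b-\alpha r$, the term $\delta\,c\wedge r\wedge p$ by $s\mapsto s+\delta r$, and the term $\epsilon\,c\wedge r\wedge q$ by $s\mapsto s-\epsilon c$), leaving only the parameters $\beta$ and $\theta$.

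The heart of the argument, and the step I expect to be the main obstacle, is to establish this rank-$4$ claim uniformly in $v$. One route is to carry the remaining two parameters back to $\Psi$ by a further sequence of changes of basis: for instance, for $v=s$ the substitutions $p\mapsto p-r$ and $s\mapsto s+a$ (together with the auxiliary replacements forced along the way) transform $\Psi+c\wedge r\wedge s$ into the normal form \eqref{X} in a new basis, so it has rank $4$; the difficulty is that these substitutions cascade, and one must check that the process always terminates at \eqref{X} for the full two-parameter family. Equivalently, I would evaluate the degree-$7$ $SL_7$-invariant $F$ cutting out $\sigma_3(\mathbb{G}(3,\mathbb{C}^7))$ (see \cite{L} and \cite[Thm.~5.1]{aop}) on $\bar{\wv}=\Psi+c\wedge r\wedge v$: its entries are affine-linear in the coordinates of $v$, so $F(\bar{\wv})$ is a polynomial in those coordinates which equals $F(\Psi)\neq 0$ at $v=0$; since over $\mathbb{C}$ a non-constant polynomial has zeros, the real content is that $F(\bar{\wv})$ is a \emph{nonzero constant} in $v$, i.e.\ that adjoining any trivector $c\wedge r\wedge v$ to the generic form \eqref{X} never lowers its rank. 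Granting this, $\bar{\wv}$ has rank $4$ for all $v$, contradicting $\operatorname{rank}(\bar{\wv})\le 3$, and therefore $\wv$ has skew-symmetric rank $5$.
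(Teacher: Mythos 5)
Your proposal is correct and takes essentially the paper's own route: both invoke Remark~\ref{West} to extract a factor $t-v$ from a putative rank-$4$ decomposition and reduce to showing that ${\eqref{X}+c\wedge r\wedge v}$ has skew-symmetric rank $4$ for every $v$, i.e.\ that the degree-$7$ equation of $\sigma_3\bigl(\mathbb{G}(3,\mathbb{C}^7)\bigr)$ never vanishes on this family; your passage to the quotient $\mathbb{C}^8/\langle t-v\rangle$ is just the contraction implicit in the paper's identity ${(t-v)\wedge\eqref{XV}=t\wedge\bigl(\eqref{X}-v\wedge c\wedge r\bigr)-v\wedge\eqref{X}}$. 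Your unipotent substitutions cutting the parameters from five to two are a nice refinement, and the final evaluation of the invariant that you leave ``granted'' is precisely the step the paper also settles by a direct (unprinted) computation.
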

\begin{proof} 
Let ${v\in \langle a, b, c, p, q, r, s \rangle}$, consider the vector ${(t-v)\in \langle a, b, c, p, q, r, s,t\rangle}$ and write ${\eqref{XV}=\eqref{X}+c\wedge r \wedge t}$. Now ${(t-v)\wedge \eqref{XV}=t\wedge\bigl(\eqref{X}-v\wedge c\wedge r\bigr) -v\wedge\eqref{X}}$. It is therefore sufficient to show that the skew-symmetric rank of ${\eqref{X}+v\wedge c\wedge r}$ is at least 4 for any ${v\in \langle a, b, c, p, q, r, s \rangle}$ and we conclude by Remark \ref{West}.

The tensor ${\eqref{X}+v\wedge c\wedge r}$ has 7 essential variables, moreover we can pick a vector ${v=\alpha_1a+\alpha_2 b+ \alpha_3 p +\alpha_4 q + \alpha_5 s \in \langle a, b, p, q, s \rangle}$ and then it is a straightforward computation to check that there is no choice of the $\alpha_i$'s that annihilates the equation of $\sigma_3\bigl(\mathbb{G}(3, \mathbb{C}^7)\bigr)$.\end{proof}

Finally, we are left with case \eqref{XXII} where we have to understand if the skew\-\mbox{-symmetric} rank is either 4 or 5. Westwick in \cite{W} exhibited a decomposition with $4$ summands, and this clearly suffices to say that the skew-symmetric rank of an element in the orbit of \eqref{XXII} is 4. Anyway the brute-force computations, with some clever observation, show that there is a family of projective  dimension a least $10$ of skew-symmetric rank $4$ decompositions for the elements in the orbit of \eqref{XXII}.

The trivial brute-force computation requires to find a solution of
\begin{multline*}
 a\wedge b\wedge c+
 q\wedge r\wedge s+
 a\wedge q\wedge p+ 
 b\wedge r\wedge p +
 b\wedge s\wedge t+
 c\wedge r\wedge t= \\
=\sum_{i=1}^4 \lambda_i v_i\wedge w_i \wedge u_i
\end{multline*}
with
\begin{align*}
v_i &= l_{i,1}a+l_{i,2}b+l_{i,3}c+ l_{i,4}p+ l_{i,5}q +l_{i,6}r +l_{i,7}s+l_{i,8}t, \\
w_i &= m_{i,1}a+m_{i,2}b+m_{i,3}c+ m_{i,4}p+ m_{i,5}q +m_{i,6}r +m_{i,7}s+m_{i,8}t, \\
u_i &= n_{i,1}a+n_{i,2}b+n_{i,3}c+ n_{i,4}p+ n_{i,5}q +n_{i,6}r +n_{i,7}s+n_{i,8}t,
\end{align*}
and $l_{i,j},m_{i,j},n_{i,j}\in \mathbb{C}$ for ${i=1, \dots, 4}$ and ${j=1,\ldots,8}$, which is almost impossible to solve in a reasonable amount of time.

Then one may look at the structure of \eqref{XXII}: it is the sum of $6$ summands, each one of them represents a $\mathbb{P}^2 \subset \mathbb{P}^7$. We draw each of this $\mathbb{P}^2$'s as a triangle where the vertices are the generators: e.g.\ Figure \ref{figure:abc} represents the projectivization of $\langle a,b,c \rangle$.

\begin{figure}[!htp]
\includegraphics[width=.3\textwidth]{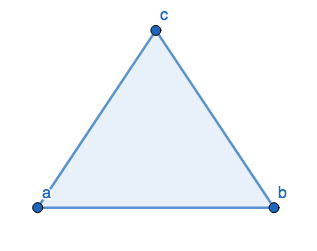}
{ \caption{\small{The projectivization of $\langle a,b,c \rangle$.}}\label{figure:abc}} 
\end{figure}
If we draw all the six $\mathbb{P}^2$'s appearing in the decomposition of \eqref{XXII} according with this technique, the graph that we get is represented in Figure \ref{figure:6}.
\begin{figure}[!htp]
\includegraphics[width=.7\textwidth]{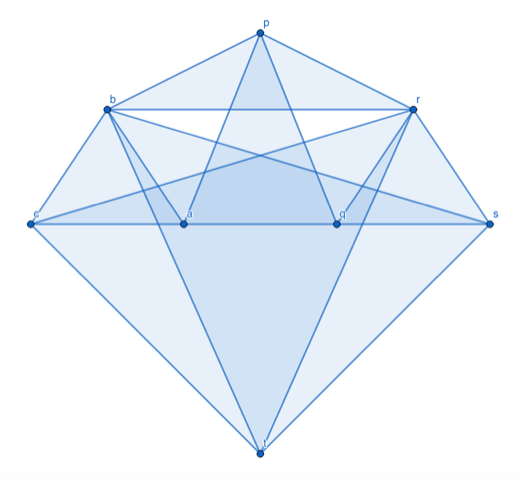}
{ \caption{\small{Picture representing the planes in \eqref{XXII}.}}\label{figure:6}} 
\end{figure}
Now, consider for example the vertex $b$; for each of the $\mathbb{P}^2$'s appearing in \eqref{XXII} where $b$ does not appear as a generator (i.e. $\langle q,r,s \rangle$, $\langle a,q,p \rangle$ and $\langle c,r,t \rangle$) there are at least two edges linking $b$ with that $\mathbb{P}^2$: the edges $bp$ and $bs$ link $b$ with $\langle q,r,s\rangle$, etc. The same phenomenon occurs for $r$, but does not for the other vertices. Therefore we draw new edges in order to link any vertex with any of the $\mathbb{P}^2$ where it does not appear as a generator with at least $2$ edges. We add:
\begin{itemize}
\item \grey{$ar$} and \grey{$at$} in order to connect $a$ twice with the triangles of \eqref{XXII} not involving $a$;
\item \red{$cp$} and \red{$cs$} in order to connect $c$ twice with the triangles of \eqref{XXII} not involving $c$;
\item \cornellred{$ps$} in order to connect $p$ twice with the triangle  of \eqref{XXII} not involving $p$;
\item \green{$bq$} and \green{$qt$} in order to cnnect $q$ twice with the triangles of \eqref{XXII} not involving $q$.
\end{itemize}
These new edges suffice to make all vertices connected at least twice with all the triangles of \eqref{XXII} not involving the vertex considered. We have drawn the new graph in Figure \ref{figure:complete:graph}; it's remarkable that the symmetry of the graph is preserved and that now every vertex has 6 edges starting from it.
\begin{figure}[!htp]
\includegraphics[width=.6\textwidth]{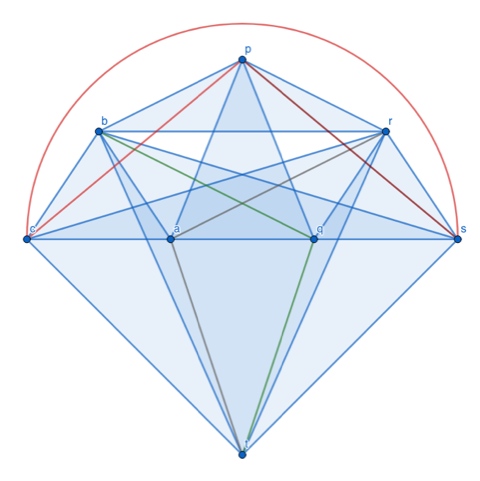}
{ \caption{\small{Picture representing the planes in \eqref{XXII}, with the extra edges.}}\label{figure:complete:graph}} 
\end{figure}
Now, taking into account the edges that we have added, we look for a decomposition of \eqref{XXII} of the following form:
\begin{equation}\label{simplified:system}\sum_{i_1}^4 \hbox{\grey{$(l_{i,1}a+l_{i,2}r+l_{i,3}t)$}}\wedge \hbox{\green{$(m_{i,1}b+m_{i,2}q+m_{i,3}t)$}} \wedge \hbox{\red{$(n_{i,1}c+n_{i,2}s+n_{i,3}p)$}}\end{equation}
with $l_{i,j},m_{i,j}\in \mathbb{C}$ for $i=1,\ldots, 4$ and ${j=1, 2,3}$. We computed the ideal of the solution of the system  \eqref{simplified:system} both with Macaulay2 (\cite{Macaulay2}) and with Bertini (\cite{Bertini}) and we got projective dimension $10$ and degree  $2556$. The ideal we got with \cite{Macaulay2} has the following generators:\\
\small{$l_{(4,1)}m_{(4,3)}n_{(4,1)},\\l_{(4,3)}m_{(4,2)}n_{(4,1)},\\l_{(4,1)}m_{(4,2)}n_{(4,1)},\\l_{(3,3)}m_{(3,1)}n_{(3,1)}-l_{(4,3)}m_{(4,1)}n_{(4,1)},\\l_{(3,1)}m_{(3,1)}n_{(3,1)}-l_{(4,1)}m_{(4,1)}n_{(4,1)}-1,\\l_{(2,2)}m_{(2,3)}n_{(2,1)}-l_{(4,2)}m_{(4,3)}n_{(4,1)}-1,\\l_{(2,2)}m_{(2,2)}n_{(2,1)}-l_{(4,2)}m_{(4,2)}n_{(4,1)},\\l_{(2,2)}m_{(2,1)}n_{(2,1)}+l_{(3,2)}m_{(3,1)}n_{(3,1)}-l_{(4,2)}m_{(4,1)}n_{(4,1)},\\l_{(1,2)}m_{(1,3)}n_{(1,3)}+l_{(2,2)}m_{(2,3)}n_{(2,3)}-l_{(4,2)}m_{(4,3)}n_{(4,3)},\\l_{(1,1)}m_{(1,3)}n_{(1,3)}-l_{(4,1)}m_{(4,3)}n_{(4,3)},\\l_{(1,3)}m_{(1,2)}n_{(1,3)}-l_{(4,3)}m_{(4,2)}n_{(4,3)},\\l_{(1,2)}m_{(1,2)}n_{(1,3)}+l_{(2,2)}m_{(2,2)}n_{(2,3)}-l_{(4,2)}m_{(4,2)}n_{(4,3)},\\l_{(1,1)}m_{(1,2)}n_{(1,3)}-l_{(4,1)}m_{(4,2)}n_{(4,3)}-1,\\l_{(1,3)}m_{(1,1)}n_{(1,3)}+l_{(3,3)}m_{(3,1)}n_{(3,3)}-l_{(4,3)}m_{(4,1)}n_{(4,3)},\\l_{(1,2)}m_{(1,1)}n_{(1,3)}+l_{(2,2)}m_{(2,1)}n_{(2,3)}+l_{(3,2)}m_{(3,1)}n_{(3,3)}-l_{(4,2)}m_{(4,1)}n_{(4,3)}+1,l_{(1,1)}m_{(1,1)}n_{(1,3)}+l_{(3,1)}m_{(3,1)}n_{(3,3)}-l_{(4,1)}m_{(4,1)}n_{(4,3)},\\l_{(1,2)}m_{(1,3)}n_{(1,2)}+l_{(2,2)}m_{(2,3)}n_{(2,2)}-l_{(4,2)}m_{(4,3)}n_{(4,2)},\\l_{(1,1)}m_{(1,3)}n_{(1,2)}-l_{(4,1)}m_{(4,3)}n_{(4,2)},\\l_{(1,3)}m_{(1,2)}n_{(1,2)}-l_{(4,3)}m_{(4,2)}n_{(4,2)},\\l_{(1,2)}m_{(1,2)}n_{(1,2)}+l_{(2,2)}m_{(2,2)}n_{(2,2)}-l_{(4,2)}m_{(4,2)}n_{(4,2)}+1,l_{(1,1)}m_{(1,2)}n_{(1,2)}-l_{(4,1)}m_{(4,2)}n_{(4,2)},\\l_{(1,3)}m_{(1,1)}n_{(1,2)}+l_{(3,3)}m_{(3,1)}n_{(3,2)}-l_{(4,3)}m_{(4,1)}n_{(4,2)}-1,\\l_{(1,2)}m_{(1,1)}n_{(1,2)}+l_{(2,2)}m_{(2,1)}n_{(2,2)}+l_{(3,2)}m_{(3,1)}n_{(3,2)}-l_{(4,2)}m_{(4,1)}n_{(4,2)},\\l_{(1,1)}m_{(1,1)}n_{(1,2)}+l_{(3,1)}m_{(3,1)}n_{(3,2)}-l_{(4,1)}m_{(4,1)}n_{(4,2)}$}.

\bigskip

We have then proved the following.

\begin{theorem} Let ${t\in \bigwedge^3 \mathbb{C}^8}$  be a skew-symmetric tensors with $8$ essential variables. Then the skew-symmetric rank of $t$ is $4$ except in the following cases:
\begin{itemize}
\item $t$ is of type  either \eqref{XVI} or \eqref{XIX}, in which cases its skew-symmetric rank is $3$,
\item $t$ is of type  \eqref{XV}, in which case its skew-symmetric rank is $5$.
\end{itemize}
\end{theorem}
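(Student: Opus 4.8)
The plan is to assemble the theorem from the orbit-by-orbit analysis carried out throughout this subsection, once we observe that the hypothesis of eight essential variables pins $t$ down to the genuinely eight-dimensional normal forms. First I would invoke Remark \ref{essentialvars}: the smallest subspace $W\subseteq\mathbb{C}^8$ with $t\in\bigwedge^3W$ satisfies $W^{\perp}=\ker\cC_t^{1,2}$, so demanding eight essential variables is exactly the condition $\ker\cC_t^{1,2}=0$, i.e.\ that $t$ not lie in $\bigwedge^3W$ for any proper $W$. Since the normal forms \eqref{II}--\eqref{X} are already realized inside a $7$-dimensional subspace, a trivector with eight essential variables must fall into one of Gurevich's orbits \eqref{XI}--\eqref{XXIII}. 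The theorem therefore reduces to determining the skew-symmetric rank for each of these thirteen orbits.

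Second, I would collect the determinations that follow from the orbit-containment diagram of \cite{Dj,bv} together with the explicit presentations. The orbit \eqref{XVI} has a presentation with three summands and orbit closure not contained in $\sigma_2(\mathbb{G}(3,\mathbb{C}^8))$, hence rank $3$; likewise \eqref{XIX} has rank $3$ since its orbit closure equals $\sigma_3(\mathbb{G}(3,\mathbb{C}^8))$. For the rank-$4$ verdicts I would use a uniform template: each of \eqref{XII}, \eqref{XIV}, \eqref{XVIII}, \eqref{XX}, \eqref{XXI}, \eqref{XXIII} admits a presentation realizing rank at most $4$ (often by adding one rank-one term to a lower orbit), while its orbit closure fails to sit inside $\sigma_3(\mathbb{G}(3,\mathbb{C}^8))$, forcing rank exactly $4$. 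Lemma \ref{increasing:degree:P7}, which says that any rank-$3$ presentation spanning $\mathbb{P}^7$ lies in \eqref{XVI} or \eqref{XIX}, then rules out rank $3$ for \eqref{XI}, \eqref{XIII} and \eqref{XVII}, so these three also have rank $4$.

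Third, by Remark \ref{cases45} the only orbits not yet decided between rank $4$ and rank $5$ are \eqref{XV} and \eqref{XXII}. For \eqref{XV} I would argue as in the Westwick Proposition above: a hypothetical rank-$4$ decomposition, together with Remark \ref{West}, would produce a linear form $(t-v)$ with $(t-v)\wedge\eqref{XV}$ of rank at most $3$ in $\bigwedge^4\mathbb{C}^7$; expanding $(t-v)\wedge\eqref{XV}=t\wedge\bigl(\eqref{X}-v\wedge c\wedge r\bigr)-v\wedge\eqref{X}$ reduces this to showing that $\eqref{X}+v\wedge c\wedge r$ never has rank below $4$, which one checks by verifying that no $v\in\langle a,b,p,q,s\rangle$ annihilates the degree-$7$ equation of $\sigma_3(\mathbb{G}(3,\mathbb{C}^7))$; hence \eqref{XV} has rank $5$. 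For \eqref{XXII} I would exhibit a four-term decomposition, as in \cite{W}, which immediately gives rank $4$.

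The step I expect to be the genuine obstacle is separating rank $4$ from rank $5$ for \eqref{XV} and \eqref{XXII}, since the orbit-containment diagram alone cannot distinguish them: both have six-term presentations and orbit closures not contained in $\sigma_3(\mathbb{G}(3,\mathbb{C}^8))$. For \eqref{XV} the crucial device is the ``wedge by a linear form'' reduction to $\bigwedge^4\mathbb{C}^7$ combined with the explicit $\sigma_3$ equation, which converts a rank inequality into a finite non-vanishing check. For \eqref{XXII} the obstruction is purely computational: producing an explicit four-term decomposition, where the symmetry-guided Ansatz \eqref{simplified:system}, solved in \cite{Macaulay2} and \cite{Bertini}, supplies a positive-dimensional family of such decompositions and thereby confirms rank $4$.
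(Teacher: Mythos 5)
Your proposal is correct and follows essentially the same route as the paper: the reduction to orbits \eqref{XI}--\eqref{XXIII} via essential variables, the rank determinations from the orbit-closure diagram of \cite{Dj,bv} combined with explicit presentations (often a lower-orbit normal form plus one rank-one term), Lemma \ref{increasing:degree:P7} to rule out rank $3$ for \eqref{XI}, \eqref{XIII} and \eqref{XVII}, the Westwick wedge-by-a-linear-form reduction with the degree-$7$ equation of $\sigma_3\bigl(\mathbb{G}(3,\mathbb{C}^7)\bigr)$ for \eqref{XV}, and the explicit four-term decomposition of \cite{W} for \eqref{XXII}. The only immaterial variation is that for \eqref{XXIII} the paper obtains rank $4$ from the fact that its orbit closure is $\sigma_4\bigl(\mathbb{G}(3,\mathbb{C}^8)\bigr)$, which fills the ambient space (so its elements are generic of generic rank $4$), rather than from your uniform ``add one term to a lower orbit'' template, which does not directly apply there since the only such presentation available, via \eqref{XXII}, is not yet decided at that stage.
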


%\begin{PMM}
%We thank G. Ottaviani and J. Weyman for the following remark.
%
%\begin{remark}
%Let ${t\in \bigwedge^3 \mathbb{C}^8}$  be a skew-symmetric tensors with $8$ essential variables. % ${a,b,c,p,q,r,s,t}$. 
%Then we can use the process described in section 35.4 of \cite{gur1} to obtain its normal form, and with it, the corresponding skew-symmetric rank decomposiiton (see the table below).  
%\end{remark}
%
%\end{PMM}

\bigskip

We list all cases of trivectors in the following table, where we use the the Roman numbering and write the normal forms (NF) by Gurevich \cite{gur1}. We add the skew-symmetric rank decomposition (SD) and the skew-symmetric rank (SSR). In cases XII, XIV, XV, XVIII, XX, and XXII, the SD is presented using the same basis as the NF, as in \cite{W}. In all other cases, we use linearly independent vectors ${v_0,\ldots, v_7}$, and general linear forms ${m_0, \ldots, m_3\in \langle v_0,\ldots, v_6 \rangle}$ and ${l_0, l_1, l_2\in \langle v_0,\ldots, v_7 \rangle}$ 

Note that for IX and X the presentation of \cite{W} allows the two following SD:  $ab(c-p) + (a-r)(b+q)p + rq(p-s)$, for \eqref{IX}, and  $aqp + (b+s)(r-c)p + (a+p)bc + (p+q)rs$, for  \eqref{X}.

w

\newcommand{\nf}{ & NF: }
\newcommand{\sd}{\\ & SD: }
\newcommand{\ssr}{\\ & SSR: }
\newcommand{\sep}{ \\  \hline}

\begin{longtable}{rl}
\hline
II 
  \nf $q  r  s$ 
  \sd  $v_0  v_1  v_2$ 
  \ssr $1$
  \sep
III  
  \nf $a  q  p + b  r  p$ 
  \sd $v_0  v_1  v_2+ v_0 v_3  v_4$ 
  \ssr $2$
  \sep 
IV  
  \nf $a  p  r + b  r  p + c  p  q$ 
  \sd $v_0 v_1  v_2+v_0 v_3  v_4+v_1 v_3  v_5$ 
  \ssr $3$
  \sep 
V 
  \nf $a  b  c + p  r  q$ 
  \sd $v_0 v_1  v_2+v_3 v_4  v_5$ 
  \ssr $2$
  \sep 
VI 
  \nf $a q p +b r p+ c s p$ 
  \sd $v_0 v_1 v_2 + v_0 v_3 v_4 + v_0 v_5 v_6$
  \ssr $3$
  \sep 
VII 
  \nf $q r s+ a q p +b r p+ c s p$ 
  \sd $v_0 v_1 v_2 + v_0 v_3 v_4 + v_5 v_6 (v_0+ \cdots +v_6)$ 
  \ssr $3$
  \sep 
VIII 
  \nf $a b c + q r s+ a q p $
  \sd $v_0 v_1 v_2 + v_0 v_3 v_4 + v_3 v_5 v_6$ 
  \ssr $3$
  \sep 
IX 
  \nf $a b c + q r s+ a q p +b r p$
  \sd $v_0 v_1  v_2 +v_3 v_4  v_5  +v_6(v_0+\cdots +v_5)m_0$ 
  \ssr $3$ 
  \sep
X 
  \nf $a b c + q r s + a q p + b r p + c s p$
  \sd $v_0 v_1  v_2 +v_3 v_4  v_5  +v_6 (v_0+\cdots +v_6) m_0+  m_1 m_2  m_3$ 
  \ssr $4$
  \sep 
XI 
  \nf $a q p+ b r p + c s p+  c r t $ 
  \sd $v_0v_1v_2 + v_0v_3v_4 + v_0v_5v_6 + v_1v_3v_7$
  \ssr $4$
  \sep 
XII 
  \nf $q r s+ a q p+  b r p + c s p+  c r t$ 
  \sd $(a-s)qp + (q-c)(p+r)s + (t+s)cr + brp$ 
  \ssr $4$
  \sep 
XIII 
  \nf $a b c+ q r s+ a q p+  c r t$ 
  \sd $v_0v_1v_2 + v_0v_3v_4 + v_1v_5v_6 + v_3v_5v_7$ 
  \ssr $4$
  \sep 
XIV 
  \nf $a b c+ q r s+ a q p+  b r p + c r t$ 
  \sd $ab(c-p) + (a-r)(b+q)p + rq(p-s) + crt$ 
  \ssr $4$
  \sep 
XV 
  \nf $a b c+ q r s+ a q p+  b r p + c s p+  c r t$ 
  \sd $crt + aqp + (b+s)(r-c)p + (a+p)bc + (p+q)rs$ 
  \ssr $5$
  \sep 
XVI 
  \nf $a q p+  b s t+ c r t$ 
  \sd $v_0v_1v_2 + v_0v_3v_4 + v_5v_6v_7$ 
  \ssr $3$
  \sep 
XVII 
  \nf $a q p+  b r p + b s t+ c r t$ 
  \sd $v_0v_1v_2 + v_0v_3v_4 + v_1v_3v_5 + v_2v_6v_7 $ 
  \ssr $4$
  \sep 
XVIII 
  \nf $q r s+ a q p+  b r p + b s t+ c r t$ 
  \sd $(t-r)bs + crt + r(p-s)(b-q) + (a-r)qp$ 
  \ssr $4$
  \sep 
XIX 
  \nf $a q p+  b r p + c s p+  b s t+ c r t$ 
  \sd $v_0v_1v_2 + v_3v_4v_5 + v_6v_7(v_0+\cdots+v_5)$ 
  \ssr $3$
  \sep 
XX 
  \nf $q r s+ a q p+  b r p + c s p+  b s t+ c r t$ 
  \sd $(r+s)(t-r)b + (r+s)(r+p)(c-q) + (a-r-s)qp + r(b-c)(s-p-t)$ 
  \ssr $4$
  \sep 
XXI 
  \nf $a b c+ q r s+ a q p+  b s t$ 
  \sd $v_0v_1v_2 + v_0v_3v_4 + v_1v_5v_6 + v_3v_5v_7$ 
  \ssr $4$
  \sep
XXII 
  \nf $a b c+ q r s+ a q p+  b r p + b s t+ c r t$ 
  \sd $(a+r)(b+2q)(p-c+\tfrac{1}{2}s) + cr(t-3b-2q) + bs(\tfrac{1}{2}a+\tfrac{3}{2}r+t)$ \\
   & ${}\qquad + (b+q)(a+2r)(p-2c+s) $ 
  \ssr $4$
  \sep
XXIII 
  \nf $a b c+ q r s+ a q p+  b r p + c s p+  b s t+ c r t$ 
  \sd $v_0v_1v_2 + v_3v_4v_5 + v_6v_7(v_0+\cdots+v_5) + l_0l_1l_2$ 
  \ssr $4$ 
  \sep
\end{longtable}

\section*{Acknowledgements} We like to thank M. Brion who pointed out Remark \ref{Brion}. We also thank L. Oeding, G. Ottaviani, and J. Weyman for having provided us very good references, and for fruitful discussions.

The third author was partially supported by CIMA -- Centro de Investiga\c{c}\~{a}o em Matem\'{a}tica e Aplica\c{c}\~{o}es, Universidade de \'{E}vora, project UID/MAT/04674/2019 (Funda\c{c}\~{a}o para a Ci\^{e}ncia e Tecnologia). 

Part of this work was done while the third author was visiting Trento University, he wishes to thank the Mathematics Department for their hospitality.

\providecommand{\bysame}{\leavevmode\hbox to3em{\hrulefill}\thinspace}

\end{document}